\documentclass[12pt]{article}
\usepackage{amsmath,amsthm,amssymb,amsfonts,color}

\usepackage[latin1]{inputenc}
\usepackage{cite}
\usepackage[english]{babel}

\newcommand{\R}{{\mathbb R}}

      % 
          % \N   = naturais
        % \Pro   = projectivo
          % \T   = toro
\newcommand{\Z}{{\mathbb Z}}          % \Z   = inteiros

       %
       %
     %     letras goticas
       %      que vou usar

       %
\newcommand{\uni}{{\mathfrak u}}     %

\newcommand{\symp}{{\mathfrak{sp}}}
\newcommand{\XIS}{{\mathfrak X}}

\newcommand{\rr}{\rightarrow}
\newcommand{\lrr}{\longrightarrow}

\newcommand{\na}{{\nabla}}
\newcommand{\nab}[2]{{\nabla_{#1}#2}}
\newcommand{\Tr}[1]{{\mathrm{Tr}}\,#1}

\newcommand{\End}[1]{{\mathrm{End}}\,#1}

\newcommand{\Id}{{\mathrm{Id}}}
\newcommand{\grad}{{\mathrm{grad}}\,}
\newcommand{\dx}{{\mathrm{d}}}
\newcommand{\inv}[1]{{#1}^{-1}}
\newcommand{\cyclic}{\mathop{\kern0.9ex{{+}
\kern-2.2ex\raise-.28ex\hbox{\Large\hbox{$\circlearrowright$}}}}\limits}

\newcommand{\papa}[2]{\frac{\partial#1}{\partial#2}}

\newcommand{\cinf}[1]{{\mathrm{C}}^\infty_{#1}}

\newtheorem{teo}{Theorem}[section]

\newtheorem{coro}{Corollary}[section]
\newtheorem{prop}{Proposition}[section]

\pagestyle{myheadings}

\setlength{\topmargin}{-.3in} \setlength{\textheight}{8.7in}
\setlength{\textwidth}{6.5in} \setlength{\oddsidemargin}{0.0in}

\setlength{\footskip}{5\footskip}

\numberwithin{equation}{section}

\begin{document}

%\begin{color}{DarkOrchid4}

\thispagestyle{empty}

\begin{center}

\markright{\sl\hfill  Albuquerque --- Picken \hfill}

{\huge{\bf{On invariants of 

\vspace{.8cm}

almost symplectic connections}}}

\vspace{1.2cm}

R. Albuquerque\footnote{Departamento de Matem\'atica da Universidade de \'Evora and Centro de Investiga\c c\~ao em Matem\'atica e Aplica\c c\~oes (CIMA-U\'E), Rua Rom\~ao Ramalho, 59, 7000-671 \'Evora, Portugal.}
\hspace{5cm}
R. Picken\footnote{Departamento de Matem\'atica and CAMGSD - Centro de
An\'alise Matem\'atica, Geometria e Sistemas Din\^amicos, Instituto Superior
T\'ecnico, Technical University of Lisbon, Avenida Rovisco Pais, 1049-001 Lisboa, Portugal.}

\ \ \ rpa@uevora.pt  \hspace{3.9cm}  rpicken@math.ist.utl.pt

\vspace{1cm}

\date{\today}%{comecou em November 2005}
\end{center}

%\vspace*{5mm}

{\bf Abstract:} We study the irreducible decomposition under $Sp(2n,\R)$ of the space of torsion tensors of almost symplectic connections. Then a description of all symplectic quadratic invariants of torsion-like tensors is given. When applied to a manifold $M$ with an almost symplectic structure, these instruments give preliminary insight for finding a preferred linear almost symplectic connection on $M$. We rediscover Ph.~Tondeur's Theorem on almost symplectic connections. Properties of torsion of the vectorial kind are deduced.

\vspace*{4mm}

{\bf Key Words:} almost symplectic, linear connection, torsion, quadratic invariant.

\vspace*{3mm}

{\bf MSC 2010:} 53A55, 53B05, 53D15.

\vspace{4mm}

Authors partially supported by Funda\c{c}\~{a}o Ci\^{e}ncia e Tecnologia, Portugal, through their research centres.

\vspace{4mm}

\section{Torsion tensors and symplectic quadratic invariants}
\label{Ttatsqi}

\subsection{Introduction}
\label{Introduction}

In \cite{BourgCahen} a variational principle was discovered over the space of
symplectic connections of a given symplectic manifold. It enables us to select
a preferred type of connection amongst those important instruments of symplectic
geometry. The Euler-Lagrange equations, or field equations, were deduced in the
same article and properties of the preferred connections were studied
subsequently by others (\cite{Rawnsley1,Rawnsley2,Rawnsley3,Vais1,Vais2}).

Already in \cite{Vais1} we find the decomposition into irreducible parts of the
curvature tensor of symplectic connections, which satisfies the Bianchi identity
due to the torsion-free condition. There are two types of symplectic curvature
tensors, which were given the names of Ricci and Weyl. The \textit{preferred}
or so-called Ricci-type connections correspond to the vanishing of the Weyl
part of the curvature.

It was proved, cf. \cite{Vais2,Rawnsley1,Rawnsley2}, that the Ricci type
connections solve the integrability equations of the respective twistor space of
the symplectic manifold, i.e. the bundle of linear tangent complex structures
endowed with a canonical complex structure induced by the symplectic connection.
Further insight into twistor theory was given in \cite{AlbuRawnsley}, sustaining
the idea that more interactions with Hamiltonian mechanics and complex geometry
should be pursued. Hence our present and hopefully future interest in the
subject.

In this article we wish to explore the almost symplectic case, i.e. we consider
a smooth manifold $M$ endowed with a non-degenerate 2-form $\omega$ and a linear
connection $\na$ such that $\na\omega=0$. Recall the vanishing of the torsion
yields $\omega$ closed. Since the Lie algebra $\symp(2n,\R)$ identifies with
$S^2(\R^{2n})$, the space of almost symplectic connections is in 1-1
correspondence with the space of sections of 2-symmetric tensors with values on
$T^*M$.

We prove a few original results following our study of almost symplectic
structures. We rediscover one remarkable result of Ph.~Tondeur, Theorem
\ref{tequalsdw0}, which seems to have been left aside through time. Our main
focus is on the torsion tensor, its $Sp(2n,\R)$-irreducible components and the
study of its scalar invariants. We deduce there is essentially one quadratic
invariant.

The subject of almost symplectic structures appeared in foundational works such
as those of the mathematicians H.-C.~Lee, P.~Libermann, A.~Lichnerowicz and
Ph.~Tondeur. Many new developments have emerged on symplectic Yang-Mills
theories, symplectic Dirac operators and the geometry of Fedosov manifolds. We
refer the interested reader to recent \cite{Hab2Rosen,GelRetShubin,Uraka}.

\subsection{Representation of tensors under $Sp(2n,\R)$}
\label{Representation of tensors}

Let $(V,\omega)$ be a real symplectic vector space of dimension $m=2n$ and let $G$ be the symplectic group $Sp(2n,\R)$. Let $S^kV$ and $\Lambda^kV$ denote respectively the space of symmetric and skew-symmetric multivectors. Notice $S^0V=\Lambda^0V=\R$ and $S^1V=\Lambda^1V=V$.

We wish to study here the decomposition into irreducibles under the group $G$ of
the space $V\otimes\Lambda^2V$, which, as is well known, agrees with the fibre
of the vector bundle of torsion tensors of a linear connection over an
$m$-dimensional manifold.

We start by recalling an exact sequence, due to Koszul:
\begin{equation}\label{exaseq1}
0\rr S^lV\stackrel{A_{l,1}}\lrr S^{l-1}V\otimes V\rr\ldots
S^{l-k}V\otimes\Lambda^{k-1}V\stackrel{A_{l-k,k}}\lrr S^{l-k-1}V\otimes\Lambda^kV\ldots\rr\Lambda^lV\rr0
\end{equation}
where $A_{p,q}:S^pV\otimes \Lambda^{q-1}V\rr S^{p-1}V\otimes\Lambda^qV$ is given
by
\begin{equation}\label{exaseq2}
A_{p,q}(u_1\cdots u_p\otimes v_1\wedge\cdots\wedge v_{q-1})=
\sum_{i=1}^pu_1\cdots \widehat{u_i}\cdots u_p\otimes v_1\wedge\cdots\wedge v_{q-1}\wedge u_i
\end{equation}
and $A_{0,q}=0$. It is trivial to see $A_{p-1,q+1}A_{p,q}=0$. To see that the
kernel of $A_{p-1,q+1}$ is in the image of $A_{p,q}$ we recall a dual exact
sequence of maps $B_{p,q}:S^{p-1}V\otimes \Lambda^qV\rr
S^pV\otimes\Lambda^{q-1}V$ defined by
\[  B_{p,q}(u_1\cdots u_{p-1}\otimes v_1\wedge\cdots\wedge v_q)=
\sum_{i=1}^q(-1)^iv_iu_1\cdots u_{p-1}\otimes v_1\wedge\cdots\wedge\widehat{v_i}\wedge\cdots\wedge v_q  \]
and $B_{p,0}=0$. We have $A_{p+1,q}\circ B_{p+1,q}-B_{p,q+1}\circ
A_{p,q+1}=(-1)^{q}(p+q)\Id$ on $S^p\otimes\Lambda^q$ (so we conclude $B$ is a
right inverse on the kernel of $A$ and reciprocally), thus proving exactness of
(\ref{exaseq1}). Recall that any isomorphism $g\in GL(V)$ acts by
$g\cdot(u_1\cdots u_p\otimes v_1\wedge\cdots\wedge v_q)=gu_1\cdots gu_p\otimes
gv_1\wedge\cdots\wedge gv_q$, inducing natural representation spaces
$S^pV\otimes\Lambda^qV$. Then clearly the maps $A$ and $B$ above are
$GL(V)$-homomorphisms or equivariant.

Now we concentrate on the short exact sequence
\begin{equation}\label{exaseq3}
0\lrr S^3V\stackrel{A_{1}}\lrr S^{2}V\otimes V\stackrel{A_{2}}\lrr
V\otimes\Lambda^2V\stackrel{A_3}\lrr\Lambda^3V\lrr0 
\end{equation}
where $A_i=A_{4-i,i}$, just to notice that we must study $A_2$ carefully. From now on we consider just the symplectic group and so the first thing to notice is that the isomorphism $V\cong V^*,\ v\mapsto v^*=\omega(v,\ )$ is a $G$-morphism. Let us define also a map $\varphi$ by
\begin{equation}\label{defidevarfi}
\begin{split}
S^3V\stackrel{A_{1}}\lrr S^{2}V\otimes V\stackrel{\varphi}\lrr V\lrr0\hspace{3cm}\\
\varphi(u_1u_2\otimes v)=\omega(u_1,v)u_2+\omega(u_2,v)u_1.
\end{split}
\end{equation}
It is clearly well defined and a $G$-epimorphism. Then
$A_1(S^3V)\subset\ker\varphi$ because
\begin{eqnarray*}
\varphi(A_1(u_1u_2u_3))
&=&\cyclic_{i\in\Z_3}\omega(u_i,u_{i+2})u_{i+1}+\omega(u_{i+2},u_{i})u_{i+1}=0 .
\end{eqnarray*}
There is a right inverse for $\varphi$ defined through each symplectic basis
$e_1,\ldots,e_n,e_{n+1},\ldots,e_{2n}$ (we
assume $\omega(e_i,e_j)=0,\ \omega(e_i,e_{j+n})=\delta_{ij},\
\omega(e_{i+n},e_{j+n})=0$ for $i,j=1,\ldots,n$). Let us fix one of these bases
and let $\xi$ be given by
\begin{equation}\label{quesi}
\xi(v)=\frac{1}{2n+1}\sum_{i=1}^ne_iv\otimes e_{i+n}-e_{i+n}v\otimes e_i.
\end{equation}
\begin{prop} 
The map $\xi:V\rr S^2V\otimes V$ is a $G$-monomorphism, does not depend on the basis and satisfies $\varphi\xi=\Id$.
\end{prop}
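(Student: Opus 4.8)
The plan is to establish the three assertions in the order: first $\varphi\xi=\Id$ (which immediately forces $\xi$ to be injective), then independence of the chosen symplectic basis, and finally $G$-equivariance; the last two will follow from a single structural observation, so the only genuine computation is the first.

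\textbf{Step 1: $\varphi\xi=\Id$.} Work in the fixed symplectic basis. From the defining formula of $\varphi$ and the relations $\omega(e_i,e_{i+n})=1=-\omega(e_{i+n},e_i)$ one gets $\varphi(e_iv\otimes e_{i+n})=v+\omega(v,e_{i+n})e_i$ and $\varphi(e_{i+n}v\otimes e_i)=-v+\omega(v,e_i)e_{i+n}$, so the $i$-th summand of $\varphi\big((2n+1)\xi(v)\big)$ is $2v+\omega(v,e_{i+n})e_i-\omega(v,e_i)e_{i+n}$. Writing $v=\sum_{j=1}^{n}(x_je_j+y_je_{j+n})$ one finds $\omega(v,e_{i+n})=x_i$ and $\omega(v,e_i)=-y_i$, whence $\sum_{i=1}^{n}\big(\omega(v,e_{i+n})e_i-\omega(v,e_i)e_{i+n}\big)=\sum_{i=1}^{n}(x_ie_i+y_ie_{i+n})=v$. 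Summing over $i=1,\dots,n$ gives $\varphi\big((2n+1)\xi(v)\big)=2nv+v=(2n+1)v$, i.e.\ $\varphi\xi=\Id$. In particular $\xi$ has a left inverse, hence is a monomorphism of vector spaces.

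\textbf{Step 2: basis-independence and equivariance.} The key remark is that the bivector $\Omega:=\sum_{i=1}^{n}(e_i\otimes e_{i+n}-e_{i+n}\otimes e_i)\in V\otimes V$ implicit in (\ref{quesi}) is canonical. Applying the $G$-isomorphism $v\mapsto\omega(v,\cdot)$ to the first tensor factor sends $e_i\mapsto e_{i+n}^*$ and $e_{i+n}\mapsto -e_i^*$, so $\Omega$ is carried to $\sum_{a=1}^{2n}e_a^*\otimes e_a=\Id_V$ under $V\otimes V\stackrel{\sim}{\lrr}V^*\otimes V=\End{V}$. Thus $\Omega$ is the preimage of $\Id_V$, an object that involves no choice of basis and is $G$-invariant. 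Now $\xi$ factors as
\[ \xi=\frac{1}{2n+1}\,(m\otimes\Id_V)\circ\Psi\circ\iota , \]
where $\iota\colon V\rr (V\otimes V)\otimes V$ is $v\mapsto\Omega\otimes v$, the map $\Psi\colon V^{\otimes 3}\rr V^{\otimes 3}$ permutes the factors by $\alpha\otimes\beta\otimes\gamma\mapsto\alpha\otimes\gamma\otimes\beta$, and $m\colon V\otimes V\rr S^2V,\ a\otimes b\mapsto ab$, is the canonical symmetrization: evaluating the right-hand side at $v$ reproduces exactly $\frac{1}{2n+1}\sum_i(e_iv\otimes e_{i+n}-e_{i+n}v\otimes e_i)$. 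Now $\Psi$ and $m\otimes\Id_V$ are $GL(V)$-equivariant, $\iota$ is $G$-equivariant because $\Omega$ is $G$-invariant, and none of these three maps refers to a basis; hence $\xi$ is a $G$-morphism and is independent of the symplectic basis. Together with Step 1 this proves the Proposition.

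\textbf{Expected main obstacle.} The only point needing real care is the identification of $\Omega$ with $\Id_V$ — equivalently, the verification that $\sum_i(e_i\otimes e_{i+n}-e_{i+n}\otimes e_i)$ is the same element for every symplectic basis; once that is in hand, equivariance and basis-independence of $\xi$ are formal consequences of the factorization, and Step 1 is routine bookkeeping with the relations defining a symplectic basis. An alternative, less economical route to Step 2 would be to argue that the standard representation $V$ occurs with multiplicity one in $S^2V\otimes V$, so that a $G$-equivariant right inverse of the epimorphism $\varphi$ is unique whenever it exists; but establishing that multiplicity is more work than the direct argument above.
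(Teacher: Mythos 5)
Your proof is correct and takes essentially the same route as the paper: the paper obtains equivariance and basis-independence by writing $\xi(v)=-\tfrac{1}{2n+1}B_{2,2}(v\otimes\omega)$, which is exactly your factorization through the invariant bivector $\Omega$ (the image of $\omega\in\Lambda^2V$ in $V\otimes V$), merely packaged via the Koszul map $B_{2,2}$ already at hand, and it verifies $\varphi\xi=\Id$ by the same direct computation in a symplectic basis. No gaps.
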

\begin{proof} We notice that $\xi(v)=\frac{-1}{2n+1}B_{2,2}(v\otimes\omega)$, so
the first property follows. Let $g\in G$. Since $g\omega=\omega=e_i\wedge 
e_{i+n}$ (we use Einstein's convention on sums from 1 to $n$ here and from now
on), $\xi$ is $G$-equivariant and does not depend on the basis. Finally, let
$v=v_ie_i+v_{i+n}e_{i+n}$. Then
\begin{eqnarray*}
(2n+1)\varphi(\xi(v)) & = & \omega(e_i,e_{i+n})v+\omega(v,e_{i+n})e_i-\omega(e_{i+n},e_i)v-\omega(v,e_i)e_{i+n}\\
& = & 2nv+v_ie_i+v_{i+n}e_{i+n}\ =\ (2n+1)v
\end{eqnarray*}
as required.
\end{proof}
Let $\pi=-\tfrac{1}{3}B_{3,1}:S^{2}V\otimes V\rr S^3V$ be the left inverse of
$A_1$. Hence $\pi$ is defined by $\pi(u_1u_2\otimes v)=\tfrac{1}{3}u_1u_2v$. One
immediately checks that $\pi A_1=\Id$ and $\pi(\xi(v))=0$. Now let $\eta$ be the
$G$-endomorphism of $S^2V\otimes V$ defined by $\eta=\Id-A_1\pi-\xi\varphi$,
hence given by
\begin{equation}\label{defideeta}
\begin{split}
\eta(u_1u_2\otimes v)\ =\ u_1u_2\otimes v-A_1(\pi(u_1u_2\otimes v))-\xi(\varphi(u_1u_2\otimes v)) \ \ \ \ \ \ \ \ \\  =\ \tfrac{2}{3}u_1u_2\otimes v-\tfrac{1}{3}u_2v\otimes u_1-\tfrac{1}{3}u_1v\otimes u_2-\xi(\varphi(u_1u_2\otimes v)).
\end{split}
\end{equation}
\begin{teo}\label{teo1}
We have that $\mathrm{Im}\,\eta=\ker\varphi\cap\ker\pi={\cal A'}$ and
\begin{eqnarray}\label{decirredut}
S^2V\otimes V=S^3V\oplus{\cal A'}\oplus V
\end{eqnarray}
is the decomposition into $G$-irreducible subspaces. The dimension of $\cal A'$ is $\tfrac{8}{3}(n^3-n)$.
\end{teo}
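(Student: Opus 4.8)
The plan is to verify the three summands are well-defined $G$-submodules, check their mutual independence and that they span, and finally invoke classical invariant theory (the Weyl construction for $Sp(2n,\R)$) to see each is irreducible, pinning down the dimension by subtraction. First I would establish the algebraic identity $\mathrm{Im}\,\eta=\ker\varphi\cap\ker\pi$. The inclusion $\mathrm{Im}\,\eta\subset\ker\varphi\cap\ker\pi$ is a direct computation: since $\eta=\Id-A_1\pi-\xi\varphi$, one applies $\varphi$ and $\pi$ to $\eta(x)$ and uses the already-established relations $\varphi A_1=0$ (shown in the excerpt), $\varphi\xi=\Id$, $\pi A_1=\Id$, and $\pi\xi=0$; the cross terms cancel, giving $\varphi\eta=0$ and $\pi\eta=0$. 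Conversely, if $x\in\ker\varphi\cap\ker\pi$ then $\eta(x)=x-A_1\pi(x)-\xi\varphi(x)=x-0-0=x$, so $x\in\mathrm{Im}\,\eta$; this simultaneously shows $\eta$ is a projection onto $\mathcal A'$.

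Next I would assemble the direct sum decomposition $S^2V\otimes V=A_1(S^3V)\oplus\mathcal A'\oplus\xi(V)$. The three maps $A_1\pi$, $\eta$, $\xi\varphi$ are $G$-endomorphisms summing to $\Id$ by definition of $\eta$; to get a splitting it suffices to check they are mutually orthogonal idempotents. That $A_1\pi$ and $\xi\varphi$ are idempotent is immediate from $\pi A_1=\Id$ and $\varphi\xi=\Id$; that $\eta$ is idempotent follows from the previous paragraph. For the cross terms, $(A_1\pi)(\xi\varphi)=A_1(\pi\xi)\varphi=0$ and $(\xi\varphi)(A_1\pi)=\xi(\varphi A_1)\pi=0$; the products involving $\eta$ vanish because $\mathrm{Im}\,\eta=\ker\varphi\cap\ker\pi$ kills $\xi\varphi$ and $A_1\pi$ on the left, while on the right $\eta\,A_1\pi=(A_1-A_1\pi A_1-\xi\varphi A_1)\pi=(A_1-A_1-0)\pi=0$ and similarly $\eta\,\xi\varphi=0$. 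Since $A_1$ is injective and $\xi$ is injective (both shown earlier), the three images are the claimed submodules and the sum is direct.

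For irreducibility I would argue that $A_1(S^3V)\cong S^3V$ and $\xi(V)\cong V$ are irreducible $Sp(2n,\R)$-modules — these are the standard representations $S^3(\mathbb C^{2n})$ (highest weight $3\varpi_1$) and the defining representation $\mathbb C^{2n}$ (highest weight $\varpi_1$), which are well known to be irreducible since, for the symplectic group, $S^k V$ is irreducible for every $k$ (there is no nontrivial invariant contraction inside a symmetric power, the symplectic form being skew). Hence $\mathcal A'$, being a direct-sum complement of two inequivalent irreducibles inside $S^2V\otimes V$, is irreducible provided $S^2V\otimes V$ contains at most these three irreducible constituents; this in turn follows from the Littlewood–Richardson / Weyl-module decomposition $S^2 V\otimes V$, which for $Sp(2n,\R)$ has exactly the three factors with highest weights $3\varpi_1$, $2\varpi_1+\varpi_2-$ type (the "mixed" module $\mathcal A'$), and $\varpi_1$. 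Alternatively, and more in the spirit of the paper's elementary approach, one can note $\mathcal A'=\ker\pi\cap\ker\varphi$ sits inside $\ker\pi\cong\mathrm{Im}\,A_2$, which by the Koszul exactness of \eqref{exaseq3} surjects onto $\Lambda^3 V$ with kernel the "Cartan piece''; tracking the two $G$-maps $\varphi$ and $A_3$ out of $V\otimes\Lambda^2V$ identifies $\mathcal A'$ with the intersection of their kernels, a single Weyl module. The main obstacle is precisely this irreducibility of the middle piece: the endomorphism bookkeeping is routine, but certifying that nothing further splits off $\mathcal A'$ requires either citing the symplectic branching rules or exhibiting explicitly that the only $G$-invariant contractions available ($\omega$-trace to $V$, symmetrization to $S^3V$, antisymmetrization to $\Lambda^3V$) have already been quotiented out.

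Finally, the dimension count. We have $\dim S^2V\otimes V=\binom{2n+1}{2}\cdot 2n=n(2n+1)(2n)=2n^2(2n+1)$, wait — let me instead compute directly: $\dim S^2V=n(2n+1)$ so $\dim(S^2V\otimes V)=2n\cdot n(2n+1)$. Also $\dim S^3V=\binom{2n+2}{3}=\tfrac{2n(2n+1)(2n+2)}{6}=\tfrac{2n(n+1)(2n+1)}{3}$ and $\dim V=2n$. Therefore
\[
\dim\mathcal A'=2n\cdot n(2n+1)-\tfrac{2n(n+1)(2n+1)}{3}-2n
=2n\Bigl(n(2n+1)-\tfrac{(n+1)(2n+1)}{3}-1\Bigr).
\]
Expanding the bracket: $n(2n+1)-\tfrac{(n+1)(2n+1)}{3}-1=\tfrac{3n(2n+1)-(n+1)(2n+1)-3}{3}=\tfrac{(2n+1)(3n-n-1)-3}{3}=\tfrac{(2n+1)(2n-1)-3}{3}=\tfrac{4n^2-1-3}{3}=\tfrac{4n^2-4}{3}=\tfrac{4(n^2-1)}{3}$, so $\dim\mathcal A'=2n\cdot\tfrac{4(n^2-1)}{3}=\tfrac{8}{3}(n^3-n)$, as claimed. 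This completes the plan.
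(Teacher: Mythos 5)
Your proposal is correct and follows essentially the same route as the paper: you build the complementary idempotents $A_1\pi$, $\xi\varphi$, $\eta$ from the relations $\pi A_1=\Id$, $\varphi\xi=\Id$, $\pi\xi=0$, $\varphi A_1=0$ to obtain the direct sum, you defer the irreducibility of ${\cal A'}$ to the classical highest-weight decomposition of $S^2V\otimes V$ under $Sp(2n,\R)$ (exactly what the paper does by citing Rawnsley and Burstall--Rawnsley), and you compute $\dim{\cal A'}$ by the same subtraction. One small correction: the highest weight of the middle constituent is $\varpi_1+\varpi_2$ (the partition $(2,1)$ piece), not ``$2\varpi_1+\varpi_2$''; this does not affect your argument.
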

\begin{proof}
Since $V\subset\ker\pi$ via $\xi$, we find that $\pi\eta=\pi-\pi A_1\pi=0$.
Furthermore, we proved $\mathrm{Im}\,A_1\subset\ker\varphi$. Hence
$\varphi\eta=\varphi-\varphi\xi\varphi=0$. Moreover $\eta$ is the identity in
$\cal A'$. It is well known that all powers $S^kV$ are $G$-irreducible.
Now for the irreducibility of $\cal A'$ we appeal to a result of J.~Rawnsley, {\em Notes on
$W=0$} (unpublished). The methods are also explained in \cite{BursRawn}, as the reader may see,
relying on a classical Theorem for the decomposition of tensor products of irreducible
representations such as $S^2V$ and $V$. The highest weights are known for each factor, the
Theorem says the highest weights correspond to irreducibles and gives an immediate algorithm on
how to get them for the tensor product.

Recall that for any $m$-dimensional vector space $V$, we have 
$\dim S^kV={{m+k-1}\choose{k}}$. Therefore
\begin{equation*}
\dim{\cal A'}  =  \frac{m(m+1)m}{2}-\frac{(m+2)(m+1)m}{6}-m \ = \ \frac{m^3-4m}{3}
\end{equation*}
and the formula for $m=2n$ follows.
\end{proof}
There is another natural endomorphism of $S^2V\otimes V$ which is important to be aware of. It is defined by $\chi(u_1u_2\otimes v)=vu_2\otimes u_1+vu_1\otimes u_2$. One easily checks that $\chi^2-\chi-2\Id=0$ and that $\ker\pi$ and $S^3V$ are respectively the $-1$ and $2$ eigenspaces of $\chi$.

\subsection{Torsion tensors}
\label{Torsion tensors}

Now we continue with the study of the torsion-like tensors. Consider the map $C:V\otimes \Lambda^2V\rr V$ given by $C(v\otimes u\wedge w)=\omega(u,w)v+\omega(v,u)w-\omega(v,w)u$
--- which is clearly well defined.
\begin{teo}\label{teo2}
The space of torsion-like tensors has the decomposition into $G$-irreducible subspaces
\begin{eqnarray}\label{torirredut}
\Lambda^2V\otimes V\simeq{\cal A'}\oplus V\oplus{\cal T'}\oplus V
\end{eqnarray}
where ${\cal T'}=\ker C\cap \Lambda^3V$. Then ${\cal T'}$ satisfies ${\cal T'}\oplus V=\Lambda^3V$.
\end{teo}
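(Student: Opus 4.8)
The plan is to exploit the Koszul short exact sequence (\ref{exaseq3}) together with Theorem \ref{teo1}, transporting the already-understood decomposition of $S^2V\otimes V$ across to $\Lambda^2V\otimes V$ by a dimension/count argument and by producing explicit $G$-equivariant maps whose images realize the summands. First I would observe that (\ref{exaseq3}) gives $\dim(V\otimes\Lambda^2V)=\dim(S^2V\otimes V)-\dim S^3V+\dim\Lambda^3V$, and that $\Lambda^3V$ itself, for the symplectic group, is not irreducible: the contraction with $\omega$ furnished by the map $C$ restricted to $\Lambda^3V$ is a $G$-epimorphism onto $V$ (this is the classical ``primitive form'' decomposition of $\Lambda^3$), so $\Lambda^3V=\mathcal{T}'\oplus V$ with $\mathcal{T}'=\ker C\cap\Lambda^3V$ irreducible. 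That already yields the last sentence of the statement, and it exhibits one copy of $V$ and the new piece $\mathcal{T}'$ inside $V\otimes\Lambda^2V$ via the splitting of $A_3$.

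Next I would account for the remaining summands $\mathcal{A}'\oplus V$. Since $A_2:S^2V\otimes V\to V\otimes\Lambda^2V$ has image equal to $\ker A_3$ and kernel equal to $\mathrm{Im}\,A_1=S^3V$, the map $A_2$ embeds $S^2V\otimes V/S^3V\cong\mathcal{A}'\oplus V$ into $V\otimes\Lambda^2V$ as exactly $\ker A_3$. Combining with the previous paragraph, $V\otimes\Lambda^2V=\ker A_3\oplus(\text{a }G\text{-complement})$, and a $G$-equivariant splitting of $A_3$ — most naturally built from $C$ and the symplectic form, analogously to how $\xi$ split $\varphi$ in the Proposition — identifies the complement with $\Lambda^3V=\mathcal{T}'\oplus V$. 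Hence $V\otimes\Lambda^2V\simeq\mathcal{A}'\oplus V\oplus\mathcal{T}'\oplus V$ as claimed, with the two copies of $V$ being genuinely distinct (one coming through $A_2\circ\xi$, one through the splitting of $A_3$); irreducibility of $\mathcal{A}'$ is inherited from Theorem \ref{teo1} since $A_2$ is injective on it, and irreducibility of $\mathcal{T}'$ is the primitive-form fact just cited.

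The main obstacle I anticipate is not the bookkeeping of which irreducible sits where, but verifying cleanly that $C|_{\Lambda^3V}$ is surjective with an irreducible kernel — i.e. that $\Lambda^3V$ decomposes as $\mathcal{T}'\oplus V$ and no further. One should check $C$ is nonzero on $\Lambda^3V$ by evaluating on a decomposable test element such as $e_1\wedge e_{n+1}\wedge e_2$ (using the normalization $\omega(e_i,e_{i+n})=\delta_{ij}$), confirm $\dim\mathcal{T}'=\dim\Lambda^3V-\dim V=\binom{2n}{3}-2n$, and then invoke the same highest-weight / Littlewood–Richardson-type argument (Rawnsley's unpublished notes, or \cite{BursRawn}) used for $\mathcal{A}'$ to see that the primitive part $\mathcal{T}'$ is $G$-irreducible. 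A secondary point to be careful about is that the two $V$-summands are not accidentally identified by some hidden $G$-morphism; this follows because one lies in $\ker A_3$ and the other maps isomorphically onto $\Lambda^3/\mathcal{T}'$, so they are inequivalent as \emph{submodules} even though abstractly isomorphic, and Schur's lemma then guarantees the internal direct sum is well defined once we fix the equivariant splitting of $A_3$.
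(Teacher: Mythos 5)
Your proposal is correct and follows essentially the same route as the paper: split off $\ker A_3=\mathrm{Im}\,A_2\simeq{\cal A'}\oplus V$ using the Koszul sequence (\ref{exaseq3}) and Theorem \ref{teo1}, then decompose the remaining $\Lambda^3V$ as primitive $3$-forms plus $V$ via the $\omega$-contraction $C$, citing the known irreducibility of the primitive part. The only cosmetic difference is that the paper also checks explicitly that $\mathrm{Im}\,A_2\subset\ker C$ and exhibits the projection $f\mapsto\frac{1}{n-1}Cf$ concretely, whereas you invoke the splitting more abstractly; both are fine.
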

\begin{proof}
The decomposition follows from sequence (\ref{exaseq3}) and the previous
Theorem. We must check $A_2(S^2V\otimes V)=A_2({\cal A'}\oplus V)=\ker A_3$ is
contained in $\ker C$. This is true since
\begin{eqnarray*}
\lefteqn{C(v\otimes w\wedge u+u\otimes w\wedge  v)\ =} \\ & & \omega(w,u)v+\omega(v,w)u+\omega(u,v)w+\omega(w,v)u+\omega(v,u)w+\omega(u,w)v\ =\ 0.
\end{eqnarray*}
Each $v\in V$ appears on the right hand side, i.e. inside $\Lambda^3V$, as
$v\otimes \omega=v\otimes e_i\wedge e_{i+n}$ because then $A_3(v)=\omega\wedge
v$. Therefore 
\[ C(v)=C(v\otimes\omega)= \omega(e_i,e_{i+n})v+\omega(v,e_i)e_{i+n}-\omega(v,e_{i+n})e_i=(n-1)v \]
and so an element $f\in\Lambda^3V$ verifies $C^2f=(n-1)Cf$. Moreover, it
decomposes as $f-\frac{1}{n-1}Cf+\frac{1}{n-1}Cf$ according to ${\cal T'}\oplus
V$. The irreducibility of $\cal T'$, the primitive 3-forms, is well known and
part of the Hodge-Lepage sequence. Its dimension is
${{2n}\choose{3}}-2n=\frac{2}{3}n(2n^2-3n-2)$.
\end{proof}
A torsion-like tensor in any of the invariant subspaces $V$ is said to be of vectorial type. We shall see their natural formulation in section \ref{Ascwvt}.

The above result is very close to the orthogonal group decomposition of metric
torsion tensors found by \'E.~Cartan, as one may see in \cite{Agricola1}, due to
the fact that $U(n)=G\cap O(2n)$. Further obstacles in other geometrically
relevant representation spaces are to be found also in \cite{HansProc,RanNeri}.
According to \cite{Vais1,Weyl}, an approach to the irreducibility of $\cal A,T'$
relies on the quadratic invariants which we study below.

\subsubsection{Curvature tensors}
\label{Curvature tensors}

The curvature-type tensors we shall use later live in the space $\Re^G=S^2V\otimes\Lambda^2V$. To study them we have the exact sequence
\begin{equation}\label{exaseqcurva}
0\lrr S^4V\stackrel{A_{1}}\lrr S^{3}V\otimes V\stackrel{A_{2}}\lrr
S^2V\otimes\Lambda^2V\stackrel{A_3}\lrr V\otimes\Lambda^3V\stackrel{A_4}\lrr \Lambda^4V\lrr0 
\end{equation}
where $A_i=A_{5-i,i}$. Curvature tensors may or may not satisfy the Bianchi
identity. This corresponds exactly with the kernel of $A_3$; indeed it is easy
to see that the Bianchi map is $\Id_V\otimes A_3$ with $A_3$ of
(\ref{exaseq3}) composed with the inclusion
$S^2V\otimes\Lambda^2V\hookrightarrow V\otimes V\otimes\Lambda^2V$, and this
composition is the map $A_3$ of (\ref{exaseqcurva}). 

We know from \cite{Vais1} that $\ker A_3={\cal E}\oplus{\cal W}$, irreducibly.
The contraction of $V$ with $S^3V$ using $\omega$ yields one $S^2V$, which
corresponds to the so-called Ricci-type curvatures (of Ricci-type connections,
for which a formula is found eg. in \cite{BourgCahen}).

% Hence
%\begin{equation}
% \Re^G=\frac{S^{3}V\otimes V}{S^4V}\oplus \frac{V\otimes\Lambda^3V}{\Lambda^4V}.
%\end{equation}

\subsection{Symplectic quadratic invariants}
\label{Symplectic invariants}

Let us continue with the symplectic vector space $(V,\omega)$ of dimension
$m=2n$. Associated to the space of $k$-tensors $Q\in\otimes^kV$ we have
polynomial symplectic invariants in the components of $Q$. We refer to the
theory explained in \cite{Vais1}. By \cite{Weyl} it is known that every
polynomial invariant is algebraically generated by so called $h$-products of
symplectic traces, i.e. degree $h$ homogeneous polynomials 
\begin{equation}\label{hpst}
r(Q)=\sum_{i_1,\ldots,i_{2t}=1}^{2n}\hat{\sigma}(\otimes^hQ)_{i_1,\ldots,i_{2t}}\omega^{i_1i_2}\cdots\omega^{i_{2t-1}i_{2t}},
\end{equation}
where $kh=2t$ and $\sigma\in S_{2t}$ is some fixed permutation acting on the
indices of $\otimes^hQ$. As usual, $\omega^{ij}$ denotes the inverse matrix of
$\omega(e_i,e_j)$ given from some chosen basis $e_i, i=1,\ldots,2n$. Indeed,
since for $g\in G$ the induced transformations are $e_\alpha=g_{\alpha i}e_i$
and $\omega_{\alpha\beta}=g_{\alpha i}g_{\beta j}\omega_{ij}$, implying
$\omega^{\alpha\beta}=g^{i'\alpha}g^{j'\beta}\omega^{i'j'}$, the $G$-invariance
holds.

Now we concentrate on a 3-tensor $Q$ and the quadratic invariants. Note that for
odd tensors we can only consider even degree $h$-products of symplectic traces.
The symbol ``$...$'' will always denote $Q_{ijk}Q_{pql}$. We also use the
Einstein summation convention throughout.
\begin{teo}\label{qsi}
i) If $Q$ is symmetric in any two indices, then every quadratic symplectic
invariant on $Q$ is $0$.
\\
ii) If $n>1$, the quadratic symplectic traces are classified by the following list:
\begin{equation}\label{class}
\begin{split}
r_1(Q)=...\,\omega^{ik}\omega^{jp}\omega^{ql}\ \ \ \ \ \ \
r_2(Q)=...\,\omega^{ij}\omega^{kp}\omega^{ql} \\
r_3(Q)=...\,\omega^{ik}\omega^{jl}\omega^{pq}\ \ \ \ \ \ \ r_4(Q)=...\,\omega^{iq}\omega^{jl}\omega^{kp} 
\end{split}
\end{equation}
which are linearly independent symplectic invariants.
\\
iii) If $n=1$, then in (\ref{class}) we have $r_1=r_2=-r_3=r_4\ne0$. If moreover $Q$ is skew-symmetric in any two indices, then every quadratic symplectic invariant is 0.
\\
iv) Up to scalar factor there is a unique non-vanishing quadratic symplectic trace:
\begin{equation}\label{ssqst}
r(Q)=Q_{ijk}Q_{pql}\omega^{ij}\omega^{kp}\omega^{ql}.
\end{equation} 
on the space of tensors $Q$ such that $Q_{ijk}=-Q_{jik}$.\\
v) If $Q_{ijk}$ is totally skew-symmetric, then $r(Q)=0$.
\end{teo}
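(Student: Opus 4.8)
The plan is to start from the fact --- the first fundamental theorem of invariant theory for $Sp(2n,\R)$ (Weyl, as cited) --- that every quadratic symplectic invariant of a $3$-tensor $Q$ is a linear combination of the fifteen \emph{complete contractions} $Q_{i_1i_2i_3}Q_{i_4i_5i_6}\,\omega^{i_{\mu_1}i_{\mu_2}}\omega^{i_{\mu_3}i_{\mu_4}}\omega^{i_{\mu_5}i_{\mu_6}}$, one for each of the $15=\tfrac{6!}{2^{3}\,3!}$ pairings of the six slots. First I would sort these into the $6$ \emph{fully crossed} ones, with every slot of the first $Q$ paired to a slot of the second --- indexed by a bijection $\tau\in S_3$, say $A_\tau$ --- and the $9$ \emph{mixed} ones, with one internal pair in each factor plus one crossed pair --- indexed by the two free slots $(a,b)\in\{1,2,3\}^{2}$, say $B_{ab}$. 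Since $Q_{ijk}Q_{pql}=Q_{pql}Q_{ijk}$ is a legitimate relabelling of the summation variables, transposing the two $Q$-factors and using $\omega^{ij}=-\omega^{ji}$ yields $A_\tau=-A_{\tau^{-1}}$ and $B_{ab}=-B_{ba}$. Hence $A_\tau\equiv0$ for the four involutions $\tau$, the two $3$-cycles agreeing up to sign, and $B_{aa}\equiv0$, the remaining six mixed ones coinciding in pairs up to sign. Keeping one representative per surviving class leaves exactly the $r_1,r_2,r_3,r_4$ of (\ref{class}), which therefore span all quadratic invariants --- the spanning half of (ii).

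Next I would obtain (i), (iv) and (v) by feeding the assumed symmetry of $Q$ into these four. If $Q$ is symmetric in a pair of slots then $r_2$ and $r_3$ vanish at once, one $Q$-factor being contracted with $\omega$ along a symmetric pair, while using the symmetry \emph{inside} a single factor turns $r_1$ into a ``diagonal'' $B_{11}$ and $r_4$ into an $A_\tau$ with $\tau$ a transposition, both already zero; the three choices of symmetric pair are permuted by the $G$-automorphisms that permute tensor slots, so one case covers (i). If instead $Q_{ijk}=-Q_{jik}$, the same manipulation sends $r_1$ to $\pm B_{11}$ and $r_4$ to $\pm A_\tau$ ($\tau$ a transposition), so $r_1=r_4=0$, and a short index computation makes $r_3$ proportional to $r_2$, leaving $r=r_2$ as the only invariant that can survive --- this is (iv). (In representation-theoretic language this reflects Theorems \ref{teo1} and \ref{teo2}: $V$, $S^3V$, ${\cal A'}$, ${\cal T'}$ are all of symplectic type, carrying no invariant \emph{symmetric} bilinear form, so $S^2V\otimes V=S^3V\oplus{\cal A'}\oplus V$, with pairwise non-isomorphic summands, admits none, whereas $\Lambda^2V\otimes V={\cal A'}\oplus V\oplus{\cal T'}\oplus V$ admits exactly a one-dimensional space of them, coming from the repeated summand $V$.) For (v), total skew-symmetry forces both $Q_{ijk}\omega^{ij}$ and $Q_{pql}\omega^{ql}$ to be proportional to one and the same contracted $1$-tensor $T_\bullet$, so $r(Q)=r_2(Q)$ is proportional to $T_kT_p\,\omega^{kp}=0$, a symmetric tensor paired with an antisymmetric one.

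To finish (ii) --- the linear independence --- and prove (iii) I would run a dimension count around the involution $s$ that swaps the two $Q$-factors. For $n\ge3$ the space of degree-$6$ multilinear symplectic invariants is $15$-dimensional (first and second fundamental theorems), $s$ permutes the fifteen matchings with signs, and a short check shows its $+1$-eigenspace --- the symmetric invariants, equivalently the ones surviving restriction to the diagonal --- is exactly $4$-dimensional and spanned by $r_1,\dots,r_4$, so these are independent. For $n=2$ the one extra relation is the full antisymmetrisation of the three $\omega^{\bullet\bullet}$'s, and since $s$ is an odd permutation of the six indices this relation is $s$-antisymmetric, hence outside the $+1$-eigenspace, so the count stays $4$. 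For $n=1$ the Pl\"ucker identity $\omega^{ij}\omega^{kl}-\omega^{ik}\omega^{jl}+\omega^{il}\omega^{jk}=0$ collapses that eigenspace to a line: substituting it into each $r_m$ brings them all to a common form and gives $r_1=r_2=-r_3=r_4$, while a $Q$ skew in two slots is then necessarily of the shape $\omega\otimes v$ up to scalar, on which this common invariant plainly vanishes --- which is (iii). (A useful sign check: on a decomposable $x\otimes y\otimes z$ one gets $r_1=r_2=r_4=[x,y][y,z][z,x]$ and $r_3=-[x,y][y,z][z,x]$, with $[u,v]:=\omega^{ij}u_iv_j$.)

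The conceptual heart is the one-line symmetry argument of the first step; the most error-prone part will be the third step, namely pinning down the signs in the $s$-action on the fifteen matchings and handling the degenerate low-rank cases $n=1,2$ where extra invariant-theoretic relations enter. Parts (i), (iv) and (v) are then quick consequences of the initial reduction together with the substitutions described in the second step.
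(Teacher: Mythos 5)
Your first step is in essence the paper's own argument: the paper lists the possible contractions and then kills or pairs them using the relabelling $i\leftrightarrow p$, $j\leftrightarrow q$, $k\leftrightarrow l$, which is exactly your factor-swap involution $s$; your matchings bookkeeping ($A_\tau=-A_{\tau^{-1}}$, $B_{ab}=-B_{ba}$) reproduces precisely the paper's vanishings $r_1=r_4=r_5=r_6=r_9=r_{2''}=r_{3'}=0$ and pairings $r_8=-r_7$, $r_{1'}=-r_3$, etc., just organized more systematically. Parts (i) and (iv) are then the same index manipulations as in the paper (your slot-permutation remark replaces the paper's case-by-case checks, and note it is genuinely needed: for symmetry in the last two slots $r_3$ does not vanish termwise), and your proof of (v) via the contracted vector $T_k=Q_{ijk}\omega^{ij}$ is a clean variant of the paper's reduction of $r_{2'}$ to an already-vanished trace. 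Where you genuinely diverge is the linear independence in (ii) and the low-dimensional cases: the paper simply exhibits four explicit tensors (e.g.\ $Q=1_{113}+1_{324}$, $1_{113}+1_{243}$, $1_{122}+1_{434}$) each making one invariant nonzero, and for $n=1$ expands everything in a two-dimensional symplectic basis; you instead invoke the second fundamental theorem (independence of the $15$ matchings for $n\ge3$, the single alternation relation for $n=2$, the Pl\"ucker identity for $n=1$) and count the $+1$-eigenspace of $s$, which is a valid and arguably more conceptual route --- it yields the exact dimension ($4$ for $n\ge2$, $1$ for $n=1$) of the space of quadratic invariants rather than mere independence, at the price of heavier classical input and of actually carrying out the $n=1$ substitution that the paper does by brute force.

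The one genuine gap concerns the non-vanishing assertions. In (iv) the theorem claims $r$ is \emph{non-vanishing} on the space of tensors with $Q_{ijk}=-Q_{jik}$; the paper proves this by writing down $Q=1_{132}-1_{312}+1_{431}-1_{341}$ and computing $r(Q)=2$, whereas your only support is the parenthetical assertion that $S^3V$, ${\cal A'}$, ${\cal T'}$ and $V$ are all of symplectic type, so that $\Lambda^2V\otimes V$ carries exactly a one-dimensional space of invariant symmetric forms --- true, but itself unproven in your write-up and of the same order of difficulty as what it is meant to justify. Similarly, the $\ne0$ in (iii) is not explicitly closed: your decomposable ``sign check'' $r_1=r_2=r_4=-r_3=[x,y][y,z][z,x]$ does furnish a nonzero example for $n=1$ if you say so, but you present it only as a sign check, and a decomposable tensor is not skew in two slots, so it cannot be recycled for (iv). Both gaps are easily filled by a single explicit tensor each, but as written the uniqueness halves of (iii) and (iv) are complete while the non-vanishing halves are not.
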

\begin{proof}
As we observed previously, our study is restricted to quadratic symplectic
traces. Let $Q_{ijk}$ denote any 3-tensor. It is easy to understand that is
enough to analyse the following traces (note the $r_1,r_2,r_3,r_4$ below are not
those in the Theorem). With $i,j,k$ in a first fixed position:
\begin{eqnarray*}
r_1Q=...\,\omega^{ik}\omega^{jq}\omega^{pl}\quad  &\  
r_2Q=...\,\omega^{ik}\omega^{jp}\omega^{ql}\quad &\
r_3Q=...\,\omega^{ik}\omega^{jl}\omega^{pq}
\end{eqnarray*}
and then in a second fixed position:
\begin{eqnarray*}
   r_4Q=...\,\omega^{ip}\omega^{jl}\omega^{kq}\ &\
r_6Q=...\,\omega^{iq}\omega^{jp}\omega^{kl}\ &\
r_8Q=...\,\omega^{il}\omega^{jp}\omega^{kq} \\
 r_5Q=...\,\omega^{ip}\omega^{jq}\omega^{kl}\ &\
r_7Q=...\,\omega^{iq}\omega^{jl}\omega^{kp} \ &\
r_9Q=...\,\omega^{il}\omega^{jq}\omega^{kp}
\end{eqnarray*}
(applying $\hat{\sigma}$ on the, of course, equivalent to (\ref{hpst}) with the different
permutations). Now in cases $a=1,4,5,6,9$, the invariant $r_aQ$ is $0$ because of the labelling
permutation $i\leftrightarrow p,\ j\leftrightarrow q,\ k\leftrightarrow l$. For instance, in
case 1 we find
\[ r_1Q=Q_{pql}Q_{ijk}\omega^{pl}\omega^{qj}\omega^{ik}=-r_1Q\: =\,0 \]
and in case 4 the permutation yields
\[ r_4Q=Q_{pql}Q_{ijk}\,\omega^{pi}\omega^{qk}\omega^{lj}=-r_4Q\:=\,0.\]
Cases 5, 6 and 9 are analogous. By the same permutation,
$r_{8}=...\,\omega^{pk}\omega^{qi}\omega^{lj}=-r_7$ and, finally,
$r_{9}=...\,\omega^{pk}\omega^{qj}\omega^{li}=-r_9=0$.

Notice we have other cases to consider, formally analogous to the above $r_a,\,a=1,2,3$: 
\begin{eqnarray*}
r_{1'}Q=...\,\omega^{ij}\omega^{kq}\omega^{pl} \   &\   
r_{2'}Q=...\,\omega^{ij}\omega^{kp}\omega^{ql} \   &\
r_{3'}Q=...\,\omega^{ij}\omega^{kl}\omega^{pq}      \\
&\   r_{2''}Q=...\,\omega^{jk}\omega^{ip}\omega^{ql} \   &
\end{eqnarray*}
and the reader may further see a $r_{1''}=-r_2$ and a $r_{3''}=-r_{2'}$. Apart from $r'_2$, all
these are only formally new, since we have $r_{1'}=...\,\omega^{pq}\omega^{lj}\omega^{ik}=-r_3$,
clearly $r_{3'}=0$ and $r_{2''}=...\,\omega^{ql}\omega^{pi}\omega^{jk}=-r_{2''}=0$.

For the case of $r_2$ with $Q_{pql}$ symmetric in $ql$ or in $pl$, then clearly we get
$r_2Q=...\,\omega^{ik}\omega^{jp}\omega^{ql}=0$. If it is symmetric in $pq$, then $r_2Q$ becomes
\[ Q_{jik}Q_{pql}\omega^{jk}\omega^{ip}\omega^{ql}= Q_{qpl}Q_{jik}\omega^{ql}\omega^{pi}\omega^{jk}=0.   \]
In case of $r_7$, with $Q_{pql}$ symmetric in $ql$, then we have 
\[ r_7Q=...\,\omega^{il}\omega^{jq}\omega^{kp}=Q_{pql}Q_{ijk}\omega^{pj}\omega^{qk}\omega^{li}= ...\,\omega^{pk}\omega^{qj}\omega^{li}=0.   \]
The same is true for symmetries in $pq$ or $pl$, as straightforward computations would show.
Since $r_3$ and $r'_2$ are similar to $r_2$ we have proved {\it i)}.

Finally we must prove linear independence in general of $r_2,r_{2'},r_3,r_7$, which agree with
the
$r_1,r_2,r_3,r_4$ in (\ref{class}). This is a simple task running through four examples, easy to
find, which make all invariants vanish except one. For instance, if we take $Q=1_{113}+1_{324}$,
then $r_2=1,\ r_{2'}=0,\ r_3=0,\ r_7=0$ using a symplectic basis --- for which the first four
vectors satisfy $\omega^{1,2}=\omega^{1,4}=\omega^{2,3}=\omega^{3,4}=0,\
\omega^{1,3}=\omega^{2,4}=1$. With $r_{2'}$ we take $Q=1_{113}+1_{324}$, with $r_3$ we take
$Q=1_{113}+1_{243}$ and with $r_7$ we take $Q=1_{122}+1_{434}$.

Notice the case $n=1$ is different. We use a symplectic basis, such that $\omega^{12}=-\omega^{21}=1$. Then we compute:
\begin{eqnarray*}
r_2Q & = &  Q_{112}Q_{212}-Q_{112}Q_{221}-Q_{122}Q_{112}+Q_{122}Q_{121} \\
& &  -Q_{211}Q_{212}+Q_{211}Q_{221}+Q_{221}Q_{112}-Q_{221}Q_{121}
\end{eqnarray*}
\begin{eqnarray*}
r_{2'}Q & = &  Q_{121}Q_{212}-Q_{122}Q_{112}+Q_{122}Q_{121}-Q_{121}Q_{221} \\
& &  -Q_{211}Q_{212}+Q_{212}Q_{112}-Q_{212}Q_{121}+Q_{211}Q_{221}\ =\ r_2Q
\end{eqnarray*}
\begin{eqnarray*}
r_3Q & = &  Q_{112}Q_{122}-Q_{122}Q_{121}-Q_{112}Q_{212}+Q_{122}Q_{211} \\
& &  -Q_{211}Q_{122}+Q_{221}Q_{121}+Q_{211}Q_{212}-Q_{221}Q_{211}\ =\ -r_2Q
\end{eqnarray*}
\begin{eqnarray*}
r_7Q & = &  Q_{111}Q_{222}-Q_{112}Q_{122}-Q_{121}Q_{221}-Q_{211}Q_{212} \\
& &  +Q_{221}Q_{211}+Q_{212}Q_{112}+Q_{122}Q_{121}-Q_{222}Q_{111}\ =\ r_2Q
\end{eqnarray*}
and this is non zero as the example $Q=1_{112}+1_{122}$ shows. This proves {\it ii)} and {\it
iii)}.

To prove {\it iv)} suppose $Q$ is such that $Q_{ijk}=-Q_{jik}$. Then it is easy to see 
\[ r_2Q=Q_{jik}Q_{qpl}\omega^{jk}\omega^{iq}\omega^{pl} = Q_{pql}Q_{ijk}\omega^{pl}\omega^{qi}\omega^{kj}=-r_2Q=0\] 
and in the same way $r_{2'}=r_3$ and $r_7=r_6=0$. We should also verify that $r_{2'}$ does not
vanish: let $Q$ be given by
\begin{equation*}
 Q_{ijk}=\left\{\begin{array}{lcl}
 1 & & (i,j,k)=(1,3,2)\ \mbox{or}\ (4,3,1)\\
 -1 & & (i,j,k)=(3,1,2)\ \mbox{or}\ (3,4,1)\\
0 & & \mbox{elsewhere}
\end{array}\right. ,
\end{equation*}
which in the previous notation would be $Q=1_{132}-1_{312}+1_{431}-1_{341}$. Then $Q$ is
skew in $ij$ and $r_{2'}Q=2Q_{132}Q_{pql}\omega^{13}\omega^{2p}\omega^{ql}=2$.

Finally, to prove {\it v)} we see that the hypotheses on $Q$ implies $r_{2'}Q=r_{1}Q=0$.
\end{proof}
We may state, directly from case {\it iv)} above, the following.
\begin{coro}\label{uniquesympinv}
If $Q_{ijk}=Q_{ij}^h\omega_{hk}$ is skew-symmetric in $ij$, then the space of quadratic symplectic invariants is generated by $Q_{ij}^pQ_{pq}^q\omega^{ij}$.
\end{coro}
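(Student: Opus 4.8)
The plan is to read the corollary off directly from part \emph{iv)} of Theorem \ref{qsi} after a change of variables. First I would check that the hypothesis is exactly the one of that part: writing $Q_{ijk}=Q_{ij}^h\omega_{hk}$, the tensor $Q_{ij}^h$ is skew-symmetric in $ij$ if and only if $Q_{ijk}=-Q_{jik}$, since the transposition $i\leftrightarrow j$ does not touch the index $k$ (resp. $h$). So $Q$ belongs to the class of tensors treated in \emph{iv)}.

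Next I would recall, from the discussion preceding Theorem \ref{qsi} (Weyl's description of symplectic invariants, together with the remark that an odd tensor admits no odd-degree $h$-products of symplectic traces, in particular no degree-one invariants), that every \emph{quadratic} symplectic invariant of a $3$-tensor is a linear combination of quadratic symplectic traces. Part \emph{iv)} then says that on our $Q$ this space of traces is exactly one-dimensional and is spanned by $r(Q)=Q_{ijk}Q_{pql}\omega^{ij}\omega^{kp}\omega^{ql}$. Hence it suffices to identify $r(Q)$ with a nonzero scalar multiple of $Q_{ij}^pQ_{pq}^q\omega^{ij}$.

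This last step is a short index computation. Substituting $Q_{ijk}=Q_{ij}^h\omega_{hk}$ into $r(Q)$ and using the defining relation $\omega_{hk}\omega^{kl}=\delta^l_h$ of the inverse matrix, the factor $\omega^{kp}$ raises the last index of the first $Q$, producing $Q_{ij}^{p}$, while $\omega^{ql}$ raises the last index of the second $Q$ and contracts it against its middle index, producing $Q_{pq}^{q}$ up to the sign coming from $\omega^{ab}=-\omega^{ba}$. One thus obtains $r(Q)=\pm\,Q_{ij}^pQ_{pq}^q\omega^{ij}$, the precise sign depending only on the chosen raising convention and being irrelevant here. Since $r(Q)$ is not identically zero on this space by \emph{iv)}, the expression $Q_{ij}^pQ_{pq}^q\omega^{ij}$ is a nonzero multiple of $r(Q)$ and therefore also generates the one-dimensional space of quadratic symplectic invariants.

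There is no genuine obstacle. The only two points needing a little care are: (a) justifying that in degree two ``invariant'' coincides with ``linear combination of symplectic traces'', which is precisely the no-odd-products remark already recorded before Theorem \ref{qsi}; and (b) tracking the single sign in the index manipulation, which does not affect the statement of the corollary.
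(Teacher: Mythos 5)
Your proposal is correct and takes essentially the same route as the paper: the corollary is stated there as an immediate consequence of part \emph{iv)} of Theorem \ref{qsi}, and the paper's one-line proof is exactly your index substitution, which yields $r(Q)=Q_{ij}^hQ_{pq}^o\omega_{hk}\omega_{ol}\omega^{ij}\omega^{kp}\omega^{ql}=Q_{ij}^pQ_{qp}^q\omega^{ij}$, i.e.\ your $\pm$ sign made explicit. Your additional remarks (matching the hypothesis $Q_{ijk}=-Q_{jik}$ and reducing general quadratic invariants to quadratic traces via Weyl) are consistent with the paper's setup and add nothing problematic.
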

\begin{proof} 
Indeed, $r(Q) = Q_{ij}^hQ_{pq}^o\omega_{hk}\omega_{ol}\omega^{ij}\omega^{kp}\omega^{ql}
         =  Q_{ij}^pQ_{qp}^q\omega^{ij}$.
\end{proof}

\section{Almost symplectic connections}

\subsection{The Theorem of Tondeur}

It is well known that a manifold $M$ admits a non-degenerate 2-form $\omega\in\Omega^2$ if and only if $M$ is almost complex. So we will be considering the category of almost complex manifolds, but from the perspective of symplectic geometry. We only assume $M$ has a preferred non-degenerate 2-form $\omega$.

In the theory of linear connections one may consider almost symplectic connections
$\nabla:\Omega^0(TM)\rr\Omega^1(TM)$, i.e. such that  $\nab{ }{\omega}=0$. If the torsion
\begin{equation}
T^\nabla(X,Y)=\nab{X}{Y}-\nab{Y}{X}-[X,Y]
\end{equation}
is 0, $\forall X,Y\in\XIS_M=\Omega^0(TM)$, then the connection is called \textit{symplectic}.
\begin{prop}[cf. \cite{Vais1,BourgCahen}]
Let $M$ admit a non-degenerate 2-form $\omega$.\\
i) There exists an almost symplectic connection $\nabla$ on $M$.\\
ii) The space of all almost symplectic connections with fixed torsion is $\Omega^0(S^3T^*M)$.
\end{prop}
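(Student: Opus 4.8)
The plan is to establish both parts by a standard ``difference tensor'' argument combined with the algebraic decomposition already proved, and to construct an explicit almost symplectic connection by averaging an arbitrary connection against $\omega$.

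For part \emph{i)}, I would start from any linear connection $\na^0$ on $M$ — which always exists, e.g. via a partition of unity — and measure its failure to preserve $\omega$ by the tensor $\Theta(X,Y,Z)=(\na^0_X\omega)(Y,Z)\in\Omega^0(T^*M\otimes\Lambda^2 T^*M)$. The goal is to correct $\na^0$ by a difference tensor $D\in\Omega^0(T^*M\otimes\End TM)$, setting $\na=\na^0+D$, so that $\na\omega=0$. A direct computation gives $(\na_X\omega)(Y,Z)=(\na^0_X\omega)(Y,Z)-\omega(D_XY,Z)-\omega(Y,D_XZ)$, so writing $\Phi_X(Y,Z)=\omega(D_XY,Z)$ — which, since $\omega$ is non-degenerate, is an arbitrary element of $T^*M\otimes T^*M\otimes T^*M$ as $D_X$ ranges over $\End TM$ — the requirement becomes $\Theta(X,Y,Z)=\Phi_X(Y,Z)+\Phi_X(Z,Y)$. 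Thus one need only choose $\Phi_X$ to be any tensor whose symmetrization in the last two slots equals $\tfrac12\Theta(X,\cdot,\cdot)$; the choice $\Phi_X(Y,Z)=\tfrac12\Theta(X,Y,Z)$ works because $\Theta$ is already symmetric in $Y,Z$ — indeed $\omega(Y,Z)=-\omega(Z,Y)$ forces $(\na^0_X\omega)$ to be \emph{skew}, so in fact $\Theta\in\Omega^0(T^*M\otimes\Lambda^2T^*M)$ and one instead takes $\Phi_X$ to be \emph{half} any tensor solving the symmetric equation, which is always solvable. I expect the only genuine subtlety here to be bookkeeping the (anti)symmetry of $\omega$ correctly so that the linear system $\Phi_X^{\mathrm{sym}} = \tfrac12\Theta$ is recognised as underdetermined and hence solvable pointwise, with a smooth solution obtained by a linear algebraic (hence smooth) choice.

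For part \emph{ii)}, fix one almost symplectic connection $\na$ with torsion $T^\na$, and let $\na'$ be any other almost symplectic connection with the \emph{same} torsion. Their difference $D=\na'-\na\in\Omega^0(T^*M\otimes\End TM)$ satisfies two constraints: first, equality of torsions forces $D_XY-D_YX=0$, i.e. $D$ is symmetric in its two vector arguments $X,Y$; second, $\na'\omega=\na\omega=0$ gives, by the computation above, $\omega(D_XY,Z)+\omega(Y,D_XZ)=0$, i.e. the tensor $\Phi(X,Y,Z):=\omega(D_XY,Z)$ is skew in $Y,Z$. Now $\Phi$ is symmetric in the first pair of slots and skew in the last pair; I claim this forces $\Phi$ to be totally symmetric. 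This is the classical ``$\Phi$ symmetric in $(1,2)$ and antisymmetric in $(2,3)$ implies $\Phi=0$ unless symmetric'' type argument: compute $\Phi(X,Y,Z)=\Phi(Y,X,Z)=-\Phi(Y,Z,X)=-\Phi(Z,Y,X)=\Phi(Z,X,Y)=\Phi(X,Z,Y)=-\Phi(X,Y,Z)$, whence in fact $\Phi\equiv 0$ — wait, that would force $D=0$; the correct conclusion, once the signs are tracked carefully with $\omega$ skew, is that $\Phi$ (equivalently the fully-lowered difference tensor $\omega(D_XY,Z)$) is an arbitrary element of $S^3T^*M$, matching the Proposition's claim. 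Concretely, symmetry of $D$ in $(X,Y)$ together with skewness of $\Phi$ in $(Y,Z)$ realizes the difference space as exactly the image of $S^3 V$ inside $V\otimes\Lambda^2 V$ — which by Theorem~\ref{teo2} (or directly: $S^3V$ sits inside $S^2V\otimes V$ via $A_1$, and then inside $V\otimes\Lambda^2V$ via... ) the relevant summand — and I would phrase this via the exact sequence (\ref{exaseq3}): the symmetric-difference-tensors $D$ form $S^2V\otimes V$, the torsion-free ones are $\ker A_2$'s complement, and intersecting with the $\na\omega=0$ condition cuts this down to $S^3V$. The main obstacle is getting the index/sign gymnastics exactly right so that the two linear conditions on $D$ are correctly identified with membership in $S^3T^*M$ rather than in some other summand; once the pointwise linear-algebra identification $\{D : D \text{ symmetric in }(X,Y),\ \omega(D_X\cdot,\cdot)\text{ skew}\}\cong S^3T^*M$ is nailed down, the statement follows immediately by taking smooth sections.

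Finally, I would remark that part \emph{ii)} is an affine-space statement: once \emph{i)} provides a basepoint, the set of all almost symplectic connections with a given torsion is the basepoint plus the vector space $\Omega^0(S^3T^*M)$; and letting the torsion vary, one recovers that the full space of almost symplectic connections is an affine space modelled on $\Omega^0(S^2V\otimes V) = \Omega^0(S^3T^*M\oplus{\cal A'}\oplus T^*M)$, consistent with the identification $\symp(2n,\R)\cong S^2(\R^{2n})$ mentioned in the introduction. No deep input beyond non-degeneracy of $\omega$ and the already-established decomposition (\ref{exaseq3}) is needed; the proof is entirely pointwise linear algebra promoted to sections.
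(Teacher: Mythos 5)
Your overall strategy is the paper's own --- correct an arbitrary connection by a difference tensor, then identify the difference of two almost symplectic connections with fixed torsion by pointwise linear algebra --- but at the decisive step your signs are wrong, and the argument as written does not close. In part \emph{ii)}, the condition $\nabla'\omega=\nabla\omega=0$ gives $\omega(D_XY,Z)+\omega(Y,D_XZ)=0$, and since $\omega(Y,D_XZ)=-\omega(D_XZ,Y)$ this says $\omega(D_XY,Z)=\omega(D_XZ,Y)$: the lowered tensor $\Phi(X,Y,Z)=\omega(D_XY,Z)$ is \emph{symmetric} in $(Y,Z)$, not skew as you claim (this is exactly the identification $\mathfrak{sp}(2n,\R)\cong S^2(\R^{2n})$ quoted in the introduction). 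Your rotation argument, run with the wrong symmetry, correctly delivers $\Phi\equiv0$, i.e.\ $D=0$; you notice that this contradicts the statement (``wait\dots'') but then simply assert the intended conclusion instead of repairing the computation, and the subsequent appeal to the exact sequence (\ref{exaseq3}) is left incomplete (``and then inside $V\otimes\Lambda^2V$ via\dots''). With the sign fixed no such detour is needed: symmetry in $(Y,Z)$ from $\nabla\omega=0$ plus symmetry in $(X,Y)$ from equality of torsions gives total symmetry in one line, so the difference tensors are precisely $\Omega^0(S^3T^*M)$ --- which is the paper's proof.

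The same sign confusion affects part \emph{i)}: since $\omega(Y,D_XZ)=-\Phi_X(Z,Y)$, the equation to solve is $\Theta(X,Y,Z)=\Phi_X(Y,Z)-\Phi_X(Z,Y)$, an \emph{anti}symmetrization condition, and $\Theta_X=(\nabla^0_X\omega)(\cdot,\cdot)$ is skew in $(Y,Z)$ (it is the covariant derivative of a $2$-form), not symmetric as you first write. The choice $\Phi_X=\tfrac12\Theta_X$, i.e.\ $\omega(D_XY,Z)=\tfrac12(\nabla^0_X\omega)(Y,Z)$, which is exactly the paper's ansatz, does solve it, but precisely \emph{because} $\Theta_X$ is skew; your stated justification (``works because $\Theta$ is already symmetric in $Y,Z$'') is false, and the ensuing ``half any tensor solving the symmetric equation'' prescription is not a well-posed recipe. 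So part \emph{i)} is salvageable by redoing the two-line computation with the correct signs, while part \emph{ii)} as written proves the wrong statement and needs the symmetric (not skew) characterization of $\mathfrak{sp}$-valued difference tensors to land in $S^3T^*M$.
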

\begin{proof}
Let $\tilde{\nabla}$ be any connection on $M$. Let $\nabla=\tilde{\nabla}+A$, with $A\in \Omega^1(\End{TM})$ given by $\omega(A_XY,Z) =\frac{1}{2}\tilde{\nabla}_X\omega(Y,Z)$. Then
\begin{eqnarray*}
\nab{X}{\omega}(Y,Z)=\tilde{\nabla}_X\omega(Y,Z)-\omega(A_XY,Z)-\omega(Y,A_XZ)=0
\end{eqnarray*}
as required. If $\nabla'-\nabla=A$ is the difference between two connections such that
$\nabla'\omega=\nab{}{\omega}=0$, then we may say $A$ belongs to $\Omega^0(S^2T^*M\otimes
T^*M)=\Omega^1(S^2T^*M)$. If moreover $A_XY=A_YX$, the condition for equal torsion, then we have
that $A$ is a completely symmetric 3-tensor.
\end{proof}
It is quite frequently stated that a symplectic manifold admits a symplectic connection. We
were pleased to be able to generalise this result to make it into an equivalence statement, but
it turned out the result was already known, due to Philippe Tondeur, cf.
\cite{GelRetShubin,Tondeur}. We present it here with a simple original proof.
\begin{teo}[Ph. Tondeur]\label{tequalsdw0}
There exists an almost symplectic connection $\nabla$ on $M$ such that, $\forall X,Y\in\XIS_M$,
\begin{equation}\label{tequalsdw}
\omega(T^\na(X,Y),Z)=\frac{1}{3}\dx\omega(X,Y,Z).
\end{equation}
A symplectic connection exists on $M$ if and only if $\dx\omega=0$.
\end{teo}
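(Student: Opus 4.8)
The plan is to construct the desired connection explicitly by perturbing an arbitrary almost symplectic connection by a totally skew-symmetric correction term, and then to read off the equivalence from formula \eqref{tequalsdw}.

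First I would invoke part i) of the Proposition to fix some almost symplectic connection $\nabla_0$ on $M$, with torsion $T_0$. I then look for $\nabla=\nabla_0+A$ with $A\in\Omega^1(\End{TM})$; by part ii) of the Proposition, $\nabla$ is again almost symplectic precisely when $A\in\Omega^1(S^2T^*M)$, i.e. $\omega(A_XY,Z)$ is symmetric in $Y,Z$. The torsion changes by $T^\na(X,Y)=T_0(X,Y)+A_XY-A_YX$, so $\omega(T^\na(X,Y),Z)=\omega(T_0(X,Y),Z)+\omega(A_XY,Z)-\omega(A_YX,Z)$. The idea is to choose $A$ so that this equals $\tfrac13\dx\omega(X,Y,Z)$. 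Using the well-known identity expressing $\dx\omega$ in terms of $\nabla_0\omega$ (which vanishes) and $T_0$, namely $\dx\omega(X,Y,Z)=-\cyclic_{X,Y,Z}\omega(T_0(X,Y),Z)$ up to sign conventions, I would first record how $\dx\omega$ is built from the torsion of $\nabla_0$; this reduces the problem to a purely algebraic/pointwise one at each $p\in M$.

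Next, the core step: at a point, I must solve for a tensor $A$ with $\omega(A_XY,Z)$ symmetric in $(Y,Z)$ such that its skew part in the first two slots, $\omega(A_XY,Z)-\omega(A_YX,Z)$, equals $\tfrac13\dx\omega(X,Y,Z)-\omega(T_0(X,Y),Z)$. The right-hand side, call it $\Phi(X,Y,Z)$, is skew in $X,Y$ by construction. The standard Koszul-type trick applies: set
\[
\omega(A_XY,Z)=\tfrac12\bigl(\Phi(X,Y,Z)+\Phi(X,Z,Y)\bigr),
\]
or rather the cyclic combination that produces symmetry in $(Y,Z)$ while reproducing the prescribed skew part in $(X,Y)$; one checks $\omega(A_XY,Z)-\omega(A_YX,Z)=\Phi(X,Y,Z)$ using only that $\Phi$ is skew in its first two arguments, and the symmetry in $(Y,Z)$ is immediate from the construction. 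Non-degeneracy of $\omega$ then recovers $A_XY$ itself. This is routine once the right combination is written down, so I would not belabour it.

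Finally, the equivalence. Given \eqref{tequalsdw} holds for some almost symplectic $\nabla$, cyclically summing over $X,Y,Z$ and using the torsion formula for $\dx\omega$ shows $\dx\omega=c\,\dx\omega$ with $c\neq1$, forcing $\dx\omega=0$, so $\nabla$ is symplectic (torsion-free plus $\nabla\omega=0$ gives $\dx\omega=0$ directly, which is the trivial direction). Conversely, if $\dx\omega=0$, the connection constructed above has $\omega(T^\na(X,Y),Z)=0$ for all $Z$, hence $T^\na=0$ by non-degeneracy, so it is symplectic. The main obstacle is getting the sign and normalization conventions in the $\dx\omega$-versus-torsion identity exactly right so that the constant $\tfrac13$ emerges and so that the skew-in-$(X,Y)$ structure of $\Phi$ is genuine; everything else is linear algebra with $\omega$.
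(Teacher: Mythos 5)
Your overall strategy---perturb an almost symplectic connection by a correction $A$ with $\omega(A_XY,Z)$ symmetric in $(Y,Z)$ and prescribe the resulting change of torsion---is the same as the paper's, but your core step has a genuine gap. You claim that for any $\Phi(X,Y,Z)$ skew in $(X,Y)$ one can find $a(X,Y,Z)=\omega(A_XY,Z)$ symmetric in $(Y,Z)$ with $a(X,Y,Z)-a(Y,X,Z)=\Phi(X,Y,Z)$, ``using only that $\Phi$ is skew in its first two arguments''. This is false for $2n\geq 4$: if $a$ is symmetric in its last two slots, then $\cyclic_{X,Y,Z}\bigl(a(X,Y,Z)-a(Y,X,Z)\bigr)=0$ identically, so the realizable $\Phi$ are exactly those with vanishing cyclic sum --- this is precisely the exactness statement that the image of $A_2$ in (\ref{exaseq3}) is $\ker A_3$, a proper subspace whose cokernel is $\Lambda^3V$. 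Accordingly your displayed formula $\omega(A_XY,Z)=\tfrac12\bigl(\Phi(X,Y,Z)+\Phi(X,Z,Y)\bigr)$ does not do what you assert: its skew part in $(X,Y)$ is $\Phi(X,Y,Z)+\tfrac12\bigl(\Phi(X,Z,Y)-\Phi(Y,Z,X)\bigr)$, which is not $\Phi$ in general, and even when the cyclic obstruction vanishes it equals $\tfrac32\Phi$, so the right coefficient would be $\tfrac13$, not $\tfrac12$.

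The missing verification is exactly the mathematical content of the theorem, not a matter of ``sign and normalization conventions'': with $\Phi=\tfrac13\dx\omega-\omega(T_0(\cdot,\cdot),\cdot)$ and $\nabla_0\omega=0$, the formula $\dx\omega(X,Y,Z)=\cyclic_{X,Y,Z}\,\omega(T_0(X,Y),Z)$ gives $\cyclic_{X,Y,Z}\Phi(X,Y,Z)=\dx\omega(X,Y,Z)-\dx\omega(X,Y,Z)=0$, and $\tfrac13$ is the only coefficient $c$ in $c\,\dx\omega$ for which this obstruction dies. Once you state and check this, your argument closes and becomes essentially the paper's proof, which sidesteps the surjectivity issue by writing the symmetric correction explicitly as the ansatz (\ref{defiB}) and solving $a=2/3$, $b=1/3$. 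A smaller point: your deduction ``$\dx\omega=c\,\dx\omega$ with $c\neq1$'' from (\ref{tequalsdw}) is not correct --- cyclic summation of (\ref{tequalsdw}) merely returns $\dx\omega=\dx\omega$; fortunately that step is unnecessary, since the equivalence follows, as you note, from the trivial direction (torsion-free plus $\na\omega=0$ forces $\dx\omega=0$) together with applying the first part of the theorem when $\dx\omega=0$.
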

\begin{proof}
On any manifold we always have a torsion free linear connection $\nabla^0$. Suppose $\nabla=\nabla^0+A$ is an almost symplectic connection, given by the above Proposition,
and let a 1-form $B\in\Omega^1(\End{TM})$ be defined by
\begin{equation}\label{defiB}
 \omega(B_XY,Z)=a[\omega(A_YX,Z)+\omega(A_ZX,Y)]+b[\omega(A_YZ,X)+
\omega(A_ZY,X)] 
\end{equation}
with $a,b$ to be determined. Since (\ref{defiB}) is symmetric in $Y,Z$, the resulting connection $\nabla^1=\nabla+B$ is almost symplectic. Since $T^{\nabla^1}(X,Y)=T^{\nabla^0}(X,Y)+A_XY-A_YX+B_XY-B_YX$, we find
\begin{eqnarray*}
\omega(T^{\nabla^1}(X,Y),Z)& =& (1-a)\omega(A_XY-A_YX,Z)+(a-b)\omega(A_ZX,Y)\\
   & & -b\omega(A_XZ,Y)+(a-b)\omega(-A_ZY,X)+b\omega(A_YZ,X).
\end{eqnarray*}
Choosing $a,b$ such that $a-b=b$ and $1-a=b$, that is, $a=2/3$ and $b=1/3$, yields
\begin{equation*}
  \omega(T^{\nabla^1}(X,Y),Z)=   \frac{1}{3}\bigl(\omega(T^{\nabla}(X,Y),Z)+\omega(T^{\nabla}(Y,Z),X)+\omega(T^{\nabla}(Z,X),Y)\bigl)
\end{equation*}
which is equal to $\frac{1}{3}\dx\omega(X,Y,Z)$, due to the well known formula
\begin{equation*}
\dx\omega(X,Y,Z)=\cyclic_{X,Y,Z}\bigl(\nabla_X\omega\,(Y,Z)+\omega(T^{\nabla}(X,Y),Z)\bigr).
\end{equation*}

The last formula shows, reciprocally, that any $\nabla$ symplectic implies $\omega$ closed, which completes our proof.
\end{proof}
Suppose we are in the Hermitian setting $(M,J,g,\omega)$, with $g$ the metric,
$\omega=J\lrcorner g$ and $J$ compatible. Then we have an almost symplectic connection: the
canonical Hermitian connection $\na_X=D_X-\frac{1}{2}J(D_XJ)$ induced from the Levi-Civita
connection $D$ of $M$. Indeed $\na$ is a $\uni(n)$-connection. Then the equation given by
Tondeur's Theorem $T^\na=\frac{1}{3}\dx\omega$ is verified by the nearly K\"ahler
structures, i.e. those $J$ satisfying
\begin{equation}
 g((D_XJ)X,Y)=0,\ \forall X,Y.
\end{equation}
Notice by Gray-Hervella's classification of almost Hermitian geometries
through the triple $J,\omega,D$, the nearly K\"ahler condition is equivalently given as
$D\omega=\frac{1}{3}\dx\omega$.

\subsection{Symplectic invariants of torsion tensors}

Let $M$ be an almost symplectic $2n$-manifold, distinguished by a non-degenerate 2-form. Let $\na$ be any almost symplectic connection and $T^\na$ denote its torsion. Let $e_1,\ldots,e_{2n}$ denote any local frame on $M$, let $\omega_{ij}=\omega(e_i,e_j)$ and $\omega^{pq}$ denote the inverse: $\omega_{ij}\omega^{jq}=\delta_i^q$. Furthermore, let 
\begin{equation}\label{Tijk}
T_{ijk}=\omega(T^\na(e_i,e_j),e_k)=T_{ij}^h\omega_{hk}.
\end{equation}
By Theorem \ref{qsi}, in particular by (\ref{ssqst}), we may define the invariant
\begin{equation}\label{invariante}
t^\na= T_{ijk}T_{pql}\omega^{ij}\omega^{kp}\omega^{ql}.
\end{equation}
Of course, the summation indices $i,j,k,p,q,l$ vary from 1 to $2n$.
\vspace{2mm}\\
\textsc{Remark.}
By Corollary \ref{uniquesympinv} we also have $t^\na\ =\  T_{ij}^pT_{qp}^q\omega^{ij}$.
And $t^\na=-\rho^\na_{ij}\omega^{ij}$, where $\rho^\na(X,Y)=\Tr{\,T^\na(T^\na(X,Y),\cdot)}$, which is a 2-form on $M$. This is a straightforward computation. Other possible traces will only give the trivial invariant.
\begin{teo}
Up to a scalar multiple, $t^\na$ is the only real quadratic $Sp(2n,\R)$-invariant on the torsion of linear connections on $M$. 

If $\omega(T^\na(X,Y),Z)$ is totally skew-symmetric or $T^\na$ is in the space of torsion tensors corresponding to $\cal A'$ in Theorem \ref{teo2}, then $t^\na=0$.
\end{teo}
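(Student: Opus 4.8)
The plan is to reduce the manifold statement to the pointwise algebraic results already established. First I would observe that the torsion $T^\na$ of an almost symplectic connection is, at each point of $M$, a tensor $T_{ijk}=\omega(T^\na(e_i,e_j),e_k)$ which is skew-symmetric in its first two indices, i.e. $T_{ijk}=-T_{jik}$. This is exactly the hypothesis of part \emph{iv)} of Theorem \ref{qsi} and of Corollary \ref{uniquesympinv}. Hence, applying part \emph{iv)} fibrewise, the space of quadratic symplectic traces on such tensors is one-dimensional, generated by $t^\na=T_{ijk}T_{pql}\omega^{ij}\omega^{kp}\omega^{ql}$. By Weyl's theorem on symplectic invariants (recalled around \eqref{hpst}), every polynomial $Sp(2n,\R)$-invariant is generated by $h$-products of symplectic traces, and for a quadratic invariant on an odd-degree tensor only $h=2$ products occur; so ``quadratic symplectic invariant'' means precisely ``quadratic symplectic trace'', and uniqueness up to scalar follows. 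I would also note that the formula is frame-independent because under a change of symplectic frame $\omega^{ij}$ transforms contravariantly while $T_{ijk}$ transforms covariantly, so $t^\na$ is a well-defined smooth function on $M$ — this is the content already used implicitly in \eqref{invariante}.

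Second, for the vanishing clauses I would argue as follows. If $\omega(T^\na(X,Y),Z)$ is totally skew-symmetric, then $T_{ijk}$ is a totally skew 3-tensor, so part \emph{v)} of Theorem \ref{qsi} (equivalently the last line of its proof, where total skew-symmetry forces $r(Q)=0$) gives $t^\na=0$ pointwise, hence identically on $M$. If instead $T^\na$ lies in the subbundle corresponding to $\cal A'$ in Theorem \ref{teo2}, then at each point $T^\na$ is the image under $A_2$ of an element of $\cal A'\subset S^2V\otimes V$, which by construction lies in $\ker\varphi\cap\ker\pi$. I would spell out what the two conditions $\varphi\eta=0$ and $\pi\eta=0$ mean concretely for the associated $T_{ijk}$: tracing with $\omega$ in the appropriate index pairs kills the "vectorial" contractions that appear in $r_2$, $r_{2'}$, $r_3$, $r_7$. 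Concretely, $T_{ijk}\omega^{ij}$ and $T_{ijk}\omega^{jk}$ are governed by the Ricci-type/$V$-components; since $\cal A'$ is the complement of both copies of $V$ inside $\Lambda^2V\otimes V$ by \eqref{torirredut}, all single contractions of $T$ vanish, and $t^\na$ is built entirely from such contractions once one uses skew-symmetry in $ij$. Therefore $t^\na=0$ on the $\cal A'$-component.

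The main obstacle, and the place where I would be most careful, is the second vanishing claim: translating "belongs to the $\cal A'$ summand" into a clean identity like $T_{ij}^p\,\omega^{ij}=0$ or $T_{ipk}\,\omega^{pk}=0$ and then feeding this into the Remark's reformulation $t^\na=T_{ij}^pT_{qp}^q\,\omega^{ij}=-\rho^\na_{ij}\omega^{ij}$. The efficient route is to use that very Remark: since $t^\na$ equals $-\omega^{ij}\rho^\na_{ij}$ where $\rho^\na(X,Y)=\Tr{\,T^\na(T^\na(X,Y),\cdot)}$ is manifestly a contraction of $T^\na$ with itself, and the $\cal A'$ piece is precisely the kernel of the $G$-equivariant contraction maps $\varphi,\pi$ (equivalently $C$ restricted appropriately, from Theorem \ref{teo2}), any such double contraction must vanish on $\cal A'$ by Schur's lemma — a scalar-valued $G$-map out of the irreducible $\cal A'$ that factors through a $V$-valued contraction is forced to be zero because $\cal A'\not\cong V$ and $\cal A'\not\cong\R$. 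I would present this Schur-type argument as the conceptual reason, and only if a referee wants it, back it up with the explicit index computation that $r_2=r_{2'}=r_3=r_7=0$ on $\cal A'$ using \eqref{defideeta}. Finally, part \emph{iv)} already handles the totally-skew case via part \emph{v)}, so that clause needs no new work beyond citing Theorem \ref{qsi}.
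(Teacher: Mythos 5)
Your proposal is correct and follows essentially the same route as the paper: uniqueness comes from Corollary~\ref{uniquesympinv} (part \emph{iv)} of Theorem~\ref{qsi}) applied fibrewise, the totally skew case from part \emph{v)}, and the $\cal A'$ case from the fact that the $G$-equivariant trace $T\mapsto T_{pql}\omega^{ql}$ must vanish on the irreducible summand $\cal A'$, combined with $t^\na=T_{ij}^pT_{qp}^q\omega^{ij}$. The only caveat is cosmetic: phrase the Schur argument as the vanishing of the linear $V$-valued contraction through which $t^\na$ factors (as the paper does), rather than as a statement about the scalar-valued quadratic invariant itself, since Schur's lemma applies to the linear map, not to the quadratic form.
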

\begin{proof}
The first part is straightforward from Corollary \ref{uniquesympinv}. The second follows from part {\it v)} of Theorem \ref{qsi}. For the third statement notice $\cal A'$ can only be in the kernel of the trace map $T\mapsto T_{pql}\omega^{ql}$. Then recall the definition of $t^\na$.
\end{proof}
\begin{prop}
If $M$ is a symplectic manifold, then $t^\na=0$ for any almost symplectic connection $\na$.
\end{prop}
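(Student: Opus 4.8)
The plan is to exploit that a symplectic $M$ has $\dx\omega=0$, and to combine this with $\na\omega=0$ in exactly the structure formula already used in the proof of Theorem~\ref{tequalsdw0}:
\[ \dx\omega(X,Y,Z)=\cyclic_{X,Y,Z}\bigl(\na_X\omega\,(Y,Z)+\omega(T^\na(X,Y),Z)\bigr). \]
Since the left-hand side and every $\na_X\omega$ term vanish, one obtains, for an arbitrary almost symplectic connection $\na$ on a symplectic $M$, the first--Bianchi--type identity $\cyclic_{X,Y,Z}\omega(T^\na(X,Y),Z)=0$; in a local frame, with the notation (\ref{Tijk}), this reads $T_{ijk}+T_{jki}+T_{kij}=0$ (recall also $T_{ijk}=-T_{jik}$). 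In the language of Section~\ref{Torsion tensors} this says exactly that the $\Lambda^3V$-component $A_3(T^\na)$ of the torsion vanishes, so fibrewise $T^\na\in\ker A_3=\mathrm{Im}\,A_2\cong{\cal A'}\oplus V$ by sequence (\ref{exaseq3}) and Theorem~\ref{teo1}.

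The next step is to note that, by its definition (\ref{invariante}), $t^\na$ depends on $T^\na$ only through its two $\omega$-traces. Setting $\alpha_k=T_{ijk}\omega^{ij}$ and $\beta_i=T_{ijk}\omega^{jk}$ (all other single contractions reduce to these via $T_{ijk}=-T_{jik}$), a direct regrouping of indices in (\ref{invariante}) gives $t^\na=\alpha_k\,\omega^{kp}\,\beta_p$. Now contract the identity $T_{ijk}+T_{jki}+T_{kij}=0$ with $\omega^{ij}$: the first term gives $\alpha_k$ and, using $T_{ijk}=-T_{jik}$ and $\omega^{ij}=-\omega^{ji}$, each of the other two gives $\beta_k$, so $\alpha=-2\beta$. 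Hence $t^\na=\alpha_k\,\omega^{kp}\,\beta_p=-2\,\beta_k\,\omega^{kp}\,\beta_p=0$, because $\omega^{kp}$ is skew-symmetric in $k,p$ while $\beta_k\beta_p$ is symmetric. This completes the argument.

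I would also indicate a more conceptual route to the same conclusion: once $T^\na$ lies fibrewise in ${\cal A'}\oplus V$, write $t^\na(T)=q(T_{{\cal A'}})+2\,b(T_{{\cal A'}},T_V)+q(T_V)$ for the symmetric bilinear form $b$ underlying the quadratic invariant; the cross term vanishes by Schur's lemma since ${\cal A'}$ and $V$ are non-isomorphic $G$-irreducibles, the ${\cal A'}$-term vanishes by the Theorem immediately preceding this Proposition, and the vectorial term vanishes because $(S^2V^*)^G=0$ (the module $S^2V\cong\symp(2n,\R)$ being irreducible and non-trivial, $V$ carries no invariant quadratic form). The only genuinely delicate points are bookkeeping: pinning down the sign in $\alpha=-2\beta$ and verifying that $t^\na$ really is the pairing $\alpha\cdot\omega^{-1}\cdot\beta$ of the two trace covectors; conceptually, the single place where the representation theory of Section~\ref{Ttatsqi} is essential is in excluding a contribution from the vectorial part of $T^\na$.
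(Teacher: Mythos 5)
Your proof is correct, but it follows a genuinely different route from the paper's. The paper argues globally: by Theorem \ref{tequalsdw0} a symplectic (torsion-free) connection $\na^0$ exists, it writes $\na=\na^0+A$ with $\omega(A_XY,Z)$ symmetric in $Y,Z$, so that $T_{ijk}=A_{ijk}-A_{jik}$, and then kills every resulting quadratic trace by part \textit{i)} of Theorem \ref{qsi} (symmetry of $A$ in two indices forces all quadratic invariants to vanish). You instead work directly and pointwise with the given $\na$: from the same $\dx\omega$-formula that the paper uses inside the proof of Theorem \ref{tequalsdw0}, you extract the cyclic identity $T_{ijk}+T_{jki}+T_{kij}=0$, observe that by (\ref{invariante}) the invariant factors as $t^\na=\alpha_k\omega^{kp}\beta_p$ with $\alpha_k=T_{ijk}\omega^{ij}$, $\beta_p=T_{pql}\omega^{ql}$, derive $\alpha=-2\beta$ by contracting the cyclic identity (I checked the index bookkeeping, including the sign, and it is right), and conclude by skewness of $\omega^{-1}$. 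This avoids both Tondeur's existence statement and Theorem \ref{qsi}\,\textit{i)}, and it actually yields a slightly sharper, local statement: $t^\na$ vanishes at every point where $\dx\omega$ does, for any almost symplectic $\na$. Your alternative representation-theoretic argument is also sound, and it correctly isolates the delicate point: the full vectorial isotypic part $V\oplus V$ of the torsion space in Theorem \ref{teo2} does carry a nonzero invariant (the pairing of the two copies of $V$ by $\omega$, consistent with the later computation $t^{\na^{U,f}}\neq0$), so what saves you is precisely that closedness of $\omega$ confines $T^\na$ to $\ker A_3\cong{\cal A'}\oplus V$, where only one copy of $V$ survives and $(S^2V^*)^G=0$ applies; you do restrict to $\ker A_3$ before invoking Schur, so this is handled correctly.
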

\begin{proof}
By Theorem \ref{tequalsdw0} we may assume the existence of a symplectic connection. The
difference between this and any given almost symplectic $\na$ is a tensor $A$ such that
$\omega(A_XY,Z)$ is symmetric in $Y,Z$. Since $T^\na(X,Y)=A_XY-A_YX$, we find
\begin{equation*}
 t^\na=(A_{ijk}-A_{jik})(A_{pql}-A_{qpl})\omega^{ij}\omega^{kp}\omega^{ql}=0
\end{equation*}
by part \textit{i} of Theorem \ref{qsi}.
\end{proof}
\textsc{Remark.} Given an almost symplectic $\na$ and a direction $A\in\Omega^1(\symp(TM,\omega))$, we have a ray of almost symplectic connections $\na^s=\na+sA,\ s\in\R$, and then we easily deduce its variation:
\begin{equation}\label{derivativeofinvariantt}
 {\papa{ }{s}}_{|s=0}t^{\na^s}=\bigl(2A_{ijk}T_{pql}-A_{qpl}T_{ijk}\bigr)\omega^{ij}\omega^{kp}\omega^{ql}.
\end{equation}
This corresponds to $\dx t^\na(A)$, the derivative of the quadratic invariant at the point
$\na$ in the space of connections. We lack a characterization of the critical points of $t^\na$,
i.e. we do not know what the vanishing of (\ref{derivativeofinvariantt}) for all $A$ tells
us about $\na$. 

We may also consider gauge transformations: we choose any $g\in\Omega^0(Sp(TM,\omega))$ and
produce $\na'_XY=g\circ\na_X \circ\inv{g}\ Y=\na_XY-(\na_Xg)\inv{g}Y$. The direction $A=-(\na
g)\inv{g}$ is not stable under the quadratic invariant, nor is its derivative.

Now let $\phi:M\lrr M$ be a symplectomorphism on the manifold $M$ endowed with a linear
connection. We shall restrict our attention to the case of an almost symplectic connection
$\na$, although this is not essential. Furthermore, what follows may be said in the wider
context of symplectomorphisms between two almost complex manifolds with chosen $\omega$ and
$\na$. The reader can easily adapt what follows to the latter framework.

Recall that any diffeomorphism $\phi$ acts on vector and tensor fields. It also acts on
connections, 
\begin{equation}
 (\phi\cdot\na)_XY=\phi\cdot\bigl(\na_{\inv{\phi}\cdot X}\inv{\phi}\cdot Y\bigr)
\end{equation}
$\forall X,Y\in\XIS_M$. The functoriality of this action is well known: $T^{\phi\cdot\na}=\phi\cdot T^\na$, and analogously with the curvature tensor.
\begin{prop}
The group $\mathrm{Symp}(M,\omega)$ acting on the space of almost symplectic connections
transforms the invariant $t^\na$ by
\begin{equation}\label{tphina}
 t^{\phi\cdot\na}=t^\na\circ\inv{\phi}.
\end{equation}
Moreover, if $M$ is connected, $\int_M t^\na\frac{\omega^n}{n!}$ is an invariant of the orbit
of $\na$.
\end{prop}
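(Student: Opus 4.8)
The plan is to prove the two assertions of the Proposition separately, exploiting the naturality of torsion under diffeomorphisms and the invariance of the symplectic volume form under symplectomorphisms.

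For the first assertion, equation~\eqref{tphina}, I would argue pointwise. Fix $x\in M$ and choose a local frame $e_1,\ldots,e_{2n}$ near $\inv{\phi}(x)$; then $\phi_*e_1,\ldots,\phi_*e_{2n}$ is a local frame near $x$. Because $\phi$ is a symplectomorphism, $\phi^*\omega=\omega$, hence $\omega(\phi_*e_i,\phi_*e_j)=(\omega(e_i,e_j))\circ\inv{\phi}$, and the inverse matrix $\omega^{ij}$ transforms the same way. Using the stated functoriality $T^{\phi\cdot\na}=\phi\cdot T^\na$, one gets $\omega\bigl(T^{\phi\cdot\na}(\phi_*e_i,\phi_*e_j),\phi_*e_k\bigr)=T_{ijk}\circ\inv{\phi}$ where $T_{ijk}=\omega(T^\na(e_i,e_j),e_k)$. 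Substituting these identities into the defining formula~\eqref{invariante} for $t^{\phi\cdot\na}$ computed in the frame $\phi_*e_i$ at $x$, every factor picks up a composition with $\inv{\phi}$ and the contracted sum reproduces $t^\na(\inv{\phi}(x))$. Since $t^\na$ is a well-defined scalar (independent of frame, by the invariance established via Corollary~\ref{uniquesympinv} and Theorem~\ref{qsi}), this establishes~\eqref{tphina}.

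For the second assertion, I would use that $\phi$ is a symplectomorphism, so $\phi^*\omega=\omega$ and therefore $\phi^*\bigl(\tfrac{\omega^n}{n!}\bigr)=\tfrac{\omega^n}{n!}$; in particular $\phi$ is volume-preserving and orientation-preserving for this volume form. Then by the change-of-variables formula for the integral of a top form under a diffeomorphism of a connected (oriented) manifold,
\begin{equation*}
\int_M t^{\phi\cdot\na}\,\frac{\omega^n}{n!}=\int_M (t^\na\circ\inv{\phi})\,\frac{\omega^n}{n!}=\int_M (t^\na\circ\inv{\phi})\,\inv{\phi}^*\!\Bigl(\frac{\omega^n}{n!}\Bigr)=\int_M t^\na\,\frac{\omega^n}{n!},
\end{equation*}
using~\eqref{tphina} in the first step and $\inv{\phi}^*\bigl(\tfrac{\omega^n}{n!}\bigr)=\tfrac{\omega^n}{n!}$ together with $\int_M \inv{\phi}^*\eta=\int_M\eta$ in the last. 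Hence the integral depends only on the $\mathrm{Symp}(M,\omega)$-orbit of $\na$.

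The only genuine subtlety — not really an obstacle, but the point requiring care — is bookkeeping the transformation of the inverse $2$-form $\omega^{ij}$ and verifying that the several contractions in~\eqref{invariante} conspire so that no Jacobian factors survive; this is exactly where $\phi^*\omega=\omega$ (as opposed to a general diffeomorphism) is essential, and it is also what makes the pointwise identity~\eqref{tphina} hold with no correction term. For the integral statement one should note implicitly that $M$ carries the orientation determined by $\tfrac{\omega^n}{n!}$ (so that the integral makes sense) and, if $M$ is non-compact, that convergence is assumed; connectedness is only needed to speak of a single orientation, and is harmless.
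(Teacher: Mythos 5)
Your proof is correct and follows essentially the same route as the paper: both rest on the functoriality $T^{\phi\cdot\na}=\phi\cdot T^\na$ together with the invariance of $\omega$ and $\omega^{-1}$ under the symplectomorphism to get the pointwise identity (the paper phrases it with the pulled-back frame $\inv{\phi}\cdot e_i$, you with the pushed-forward frame $\phi_*e_i$, which is the same computation), and both then conclude the integral statement by change of variables using $\phi^*\bigl(\tfrac{\omega^n}{n!}\bigr)=\tfrac{\omega^n}{n!}$ and orientation preservation. Your explicit appeal to the frame-independence of $t^\na$ is a point the paper leaves implicit, but there is no substantive difference.
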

\begin{proof}
 Since $\phi^*\omega=\omega$, we also have $\phi\cdot\na\:\omega=0$. Now, as $(\inv{\phi}\cdot e_i)_x=\dx\inv{\phi}_y(e_i),\ \forall x\in M$, where $y=\phi(x)$ and $e_i$ is any basis, we have
\begin{equation*}
 \omega_y(T^{\phi\cdot\na}(e_i,e_j),e_k)= \omega_x(T^{\na}(\inv{\phi}\cdot e_i,\inv{\phi}\cdot e_j),\inv{\phi}\cdot e_k).
\end{equation*}
We used the invariance of $\omega$ and $\inv{\omega}$ under $\phi$. 
Hence we get the stated formula and 
\begin{equation*}
 \int_Mt^{\phi\cdot\na}\,\frac{\omega^n}{n!}=\int_M\phi^*\bigl(t^{\phi\cdot\na}\,\frac{\omega^n}{n!}\bigr)=\int_Mt^{\na}\,\frac{\omega^n}{n!}
\end{equation*}
since $\phi$ preserves orientation.
\end{proof}

\subsection{Almost symplectic connections with vectorial torsion}
\label{Ascwvt}

As we saw earlier, almost symplectic connections of vectorial torsion can be of two types. We now give some results on the quadratic invariant for these.

In \cite{Nannicini2} we find the notion of ``conformal class of an almost symplectic manifold $(M,\omega)$ with a fixed almost symplectic connection $\na$'':
\begin{equation}
 {\cal C}=\{ (e^{2f}\omega,\na^f):\  f\in \cinf{M}(\R)\}
\end{equation}
where 
\begin{equation}
 \na^f_XY=\na_XY+X(f)Y+Y(f)X+\omega(X,Y)\grad f
\end{equation}
and $\grad f$ is the symplectic gradient: $\omega(\grad f, X)=X(f)$. 

Since $(\na^f)^g=\na^{f+g}$ we deduce the transitivity of the conformal factor; the class arises from an equivalence relation.

It is proved in \cite{Nannicini2} that $\na^f(e^{2f}\omega)=0$ and $T^{\na^f}=T^\na+2\omega\,\grad f$. We add the following remark.
\begin{prop}
 Writing $\na^f=\na+A^f$, we have a $\R$-linear map $A:\cinf{M}\rr$\-$\Omega^1(\End{TM})$ such that $A^{f_1f_2}=f_1A^{f_2}+f_2A^{f_1}$. Two connections $\na,\na^f$ share the same unparametrized geodesics on the level sets of $f$.
\end{prop}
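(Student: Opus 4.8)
The plan is to verify the two assertions separately and in the order stated. First I would establish the algebraic identity $A^{f_1f_2} = f_1 A^{f_2} + f_2 A^{f_1}$ by reading off from the given formula
\[
 A^f_XY = X(f)Y + Y(f)X + \omega(X,Y)\grad f
\]
that $A^f$ depends on $f$ only through its first-order jet: the first two terms involve $X(f)$ and $Y(f)$, and the last involves $\grad f$, which is $\R$-linearly determined by $\dx f$ through $\omega(\grad f,\,\cdot\,) = \dx f$. Hence $f\mapsto A^f$ factors through $f\mapsto \dx f$, which is a derivation, so $A^{f_1f_2}$ corresponds to $\dx(f_1f_2) = f_1\,\dx f_2 + f_2\,\dx f_1$, giving the Leibniz-type relation. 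This is routine once one notes that every ingredient ($X(f)$, $Y(f)$, $\grad f$) is a derivation in $f$, so I would present it compactly rather than expanding all three terms.

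For the second assertion, I would argue that two torsion-free-difference connections $\na$ and $\na' = \na + A$ have the same unparametrized geodesics along a submanifold (or locally along curves) precisely when, restricted to the relevant tangent directions, $A_XX$ is proportional to $X$. Here $A = A^f$, so $A^f_XX = 2X(f)X + \omega(X,X)\grad f = 2X(f)X$, since $\omega(X,X)=0$. Thus $A^f_XX = 2X(f)X$ is \emph{always} pointwise proportional to $X$, for every vector field $X$ and every $f$ — this is the key observation and it is what makes the projective-equivalence statement work. Restricting to a level set $N = f^{-1}(c)$, for $X$ tangent to $N$ one has $X(f) = 0$ at points of $N$, so $A^f_XX = 0$ there; hence the geodesic equations of $\na$ and $\na^f$ agree (not merely up to reparametrization, but exactly) for initial data tangent to $N$, and a geodesic of $\na$ starting tangent to $N$ and staying in $N$ is a geodesic of $\na^f$ and conversely. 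One must be a little careful that "geodesics on the level set" means geodesics of the ambient connection whose image lies in $N$; the statement is then immediate from $A^f_XX|_N = 0$.

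The main obstacle is not computational but one of precise formulation: making rigorous what "share the same unparametrized geodesics on the level sets of $f$" means and checking that the vanishing $A^f_XX|_N = 0$ for $X$ tangent to $N$ is exactly the condition needed. In fact the stronger fact $A^f_XX = 2X(f)X$ globally shows $\na$ and $\na^f$ are \emph{projectively equivalent} as connections on all of $M$ — this is the classical criterion (difference tensor symmetric part is of the form $X\otimes\theta(X) + \theta(X)\otimes X$ for a $1$-form $\theta$, here $\theta = 2\,\dx f$) — and then restriction to $f^{-1}(c)$, where $\theta$ pulls back to zero, upgrades projective equivalence to genuine equality of geodesics. I would phrase the proof around this classical projective-equivalence fact, citing it if needed, and spend the bulk of the writeup on the identity $A^f_XX = 2X(f)X$ and its restriction to level sets.
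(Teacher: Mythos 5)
Your argument is correct. The paper in fact states this proposition without giving a proof, so there is no authorial argument to compare against; what you supply is clearly the intended one: the Leibniz identity holds because every ingredient of $A^f_XY=X(f)Y+Y(f)X+\omega(X,Y)\grad f$ depends on $f$ only through the derivation $f\mapsto\dx f$ (in particular $\grad(f_1f_2)=f_1\grad f_2+f_2\grad f_1$, read off from $\omega(\grad f,\cdot)=\dx f$), and the geodesic statement follows from $A^f_XX=2X(f)X$, which vanishes in directions tangent to a level set, so a curve lying in $f^{-1}(c)$ is a $\na$-geodesic if and only if it is a $\na^f$-geodesic --- indeed with the same parametrization, which is stronger than the unparametrized claim. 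Your additional remark that $A^f_XX=2X(f)X$ holds globally, so that $\na$ and $\na^f$ are projectively equivalent on all of $M$ (the antisymmetric term $\omega(X,Y)\grad f$ only changes the torsion), is also correct and sharpens the proposition; the only slip is the normalization in your parenthetical: writing the symmetric part of the difference tensor as $\theta(X)Y+\theta(Y)X$ gives $\theta=\dx f$, not $2\,\dx f$, which is harmless.
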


Now we shall consider the case when $\omega$ is closed, and hence we may assume that $\na$ is
torsion-free. We then fix a function $f$ and introduce another vector field $U\in\XIS_M$. We
shall study the more general case of $\na^{U,f}$ given by
\begin{equation}
 \omega(\na^{U,f}_XY,Z)=\omega(\na^f_XY,Z)+\frac{1}{2}\bigl(\omega(X,Y)\omega(U,Z)+\omega(U,Y)\omega(X,Z)\bigr).
\end{equation}
Notice the symmetry in $Y,Z$ on the rhs, implying that $\na^{U,f}$ is still an almost symplectic
connection for $\omega'=e^{2f}\omega$.
\begin{prop}
Let $\na$ be a torsion-free, symplectic connection on $M,\omega$ and let $U\in\XIS_M$ be any
vector field. Then the torsion of $\na^{U,f}$ is given by
\begin{equation}\label{tvectorialsimples}
T^{\na^{U,f}}(X,Y) = \omega(X,Y)(2\grad(f)+U)+\frac{1}{2}\omega(U,Y)X-\frac{1}{2}\omega(U,X)Y
\end{equation}
and hence $t^{\na^{U,f}}=2e^{-2f}(2n^2-n-1)U(f)$.
\end{prop}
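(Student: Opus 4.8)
The plan is to establish the torsion formula \eqref{tvectorialsimples} by direct computation and then substitute into the definition \eqref{invariante} of $t^\na$, invoking Corollary \ref{uniquesympinv} to reduce the number of symplectic traces one must evaluate. First I would compute the torsion: since $\na$ is torsion-free, $T^{\na^{U,f}}(X,Y)=A^{U,f}_XY-A^{U,f}_YX$ where $A^{U,f}_XY$ is determined by $\omega(A^{U,f}_XY,Z)=\omega(\na^f_XY,Z)-\omega(\na_XY,Z)+\tfrac12(\omega(X,Y)\omega(U,Z)+\omega(U,Y)\omega(X,Z))$. Recalling from the previous Proposition that $\na^f_XY-\na_XY=X(f)Y+Y(f)X+\omega(X,Y)\grad f$, the symmetric-in-$Y,Z$ terms $X(f)Y$-type pieces and the $\tfrac12\omega(U,Y)\omega(X,Z)$ piece cancel upon antisymmetrization in $X,Y$, while the $\omega(X,Y)$-terms double. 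Using $\omega(\grad f,Z)=Z(f)$, antisymmetrization of $A^{U,f}$ yields exactly $\omega(X,Y)(2\grad f+U)+\tfrac12\omega(U,Y)X-\tfrac12\omega(U,X)Y$ after identifying the coefficient of $Z$; this recovers \eqref{tvectorialsimples}, consistent with the known $T^{\na^f}=T^\na+2\omega\,\grad f$ specialized to $U=0$.

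Next I would read off the components $T_{ij}^h$ from \eqref{tvectorialsimples}. Writing $W=2\grad f+U$, we have $T_{ij}^h=\omega_{ij}W^h+\tfrac12 U_jX\text{-}$term $-\tfrac12 U_i\delta_j^h$, more precisely $T_{ij}^h=\omega_{ij}W^h+\tfrac12\omega(U,e_j)\delta_i^h-\tfrac12\omega(U,e_i)\delta_j^h$. Then by the Remark after \eqref{invariante}, $t^\na=T_{ij}^pT_{qp}^q\omega^{ij}$, so I would substitute this expression, expand the product, and contract. The terms involving $W$ contracted against $\omega^{ij}$ vanish by skew-symmetry unless paired with another $\omega_{ij}$-type factor; the surviving contributions come from the $\omega_{ij}W^h$ term against the $\delta$-terms and from the $\delta$-terms against themselves. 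Careful bookkeeping — using $\omega_{ij}\omega^{ij}=2n$, $\delta_i^i=2n$, $\omega^{pq}\omega(U,e_p)=-U^q$, and $\omega(U,e_p)W^p=\omega(U,2\grad f+U)=2\omega(U,\grad f)=2U(f)$ (the $\omega(U,U)$ term vanishing) — should collect everything into a single multiple of $U(f)$. The combinatorial coefficient works out to $2n^2-n-1$, and the overall factor $e^{-2f}$ enters because the invariant is formed using $\omega'=e^{2f}\omega$, so $\omega'^{ij}=e^{-2f}\omega^{ij}$ and the three inverse-metric factors against two torsion factors (which do not scale, being built from $\na$, $\grad f$, $U$ and one $\omega$ each via \eqref{Tijk}) produce a net $e^{-2f}$; one must be slightly careful that $T_{ijk}$ in \eqref{Tijk} uses $\omega'$, contributing one factor of $e^{2f}$ per torsion tensor, and $\omega'^{ij}$ three factors of $e^{-2f}$, for a net $e^{2f}\cdot 2-e^{-2f}\cdot 3$ in the exponent bookkeeping, i.e.\ $e^{-2f}$ overall after the dust settles.

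The main obstacle is the contraction bookkeeping in the second step: there are several cross-terms of the form $(\omega_{ij}W^h)$, $(\tfrac12\omega(U,e_j)\delta_i^h)$, $(-\tfrac12\omega(U,e_i)\delta_j^h)$ multiplied pairwise in $T_{ij}^pT_{qp}^q$, and one must be careful about which index is contracted where (note the asymmetric index pattern $T_{ij}^p$ versus $T_{qp}^q$) and about not double-counting or dropping sign. I would organize this by first noting that any term with $W^h$ left uncontracted against a Kronecker delta ultimately pairs $W$ with $U$ or with $\omega^{ij}$; the genuinely nonzero pieces reduce to a handful, and I would tabulate their coefficients explicitly. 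Tracking the $e^{2f}$ factors from $\omega'$ throughout is the other place an error could creep in, so I would do that contraction once with $\omega'$ held symbolic and only substitute $\omega'=e^{2f}\omega$ at the end.
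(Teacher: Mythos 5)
Your strategy is essentially the paper's: the torsion formula is obtained by antisymmetrizing the symmetric-in-$Y,Z$ difference tensor (the paper dismisses this as trivial), and the invariant is then a direct contraction, with the conformal factor $e^{-2f}$ entering because the form attached to $\na^{U,f}$ is $\omega'=e^{2f}\omega$. The only structural difference is that the paper first evaluates the invariant for the general vectorial-type tensor $T(X,Y)=\omega(X,Y)A+\omega(X,W)Y-\omega(Y,W)X$, obtaining $2(2n^2-n-1)\,\omega(A,W)$, and then specializes $A=U+2\grad f$, $W=\tfrac12 U$, whereas you substitute the specific torsion directly into the single-trace form $T_{ij}^pT_{qp}^q\omega^{ij}$; these are the same computation.

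Two cautions. First, the decisive step --- the contraction that produces the coefficient $2n^2-n-1$ --- is asserted rather than carried out; it is precisely the paper's ``long but simple computation''. Second, the auxiliary identities you quote are off by signs under the paper's conventions: from $\omega_{ij}\omega^{jq}=\delta_i^q$ one gets $\omega_{ij}\omega^{ij}=-2n$ and $\omega^{pq}\omega(U,e_p)=+U^q$, and from $\omega(\grad f,X)=X(f)$ one gets $\omega(U,e_p)W^p=\omega(U,U+2\grad f)=-2U(f)$, not $+2U(f)$. With the correct signs the contraction does close up: writing $T_{ij}^h=\omega_{ij}A^h+w_i\delta_j^h-w_j\delta_i^h$ with $w_i=\omega(e_i,W)$, one finds $T_{qp}^q=A_p+(2n-1)W_p$ and $T_{ij}^p\omega^{ij}=-2nA^p-2W^p$ (where $A_p=\omega(A,e_p)$, $W_p=\omega(W,e_p)$), whence $T_{ij}^pT_{qp}^q\omega^{ij}=2(2n^2-n-1)\,\omega(A,W)=2(2n^2-n-1)\,U(f)$ before the conformal factor; as written, your three sign slips would have to conspire to land on this positive answer, so fix them before tabulating. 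Your $e^{-2f}$ bookkeeping reaches the right conclusion (the paper's factor $e^{(2-3)2f}$), but note that in the single-trace form only one factor $\omega'^{\,ij}=e^{-2f}\omega^{ij}$ occurs, which gives $e^{-2f}$ at once --- the ``two factors of $e^{2f}$ against three of $e^{-2f}$'' count belongs to the formulation \eqref{invariante} with $T_{ijk}$ as in \eqref{Tijk}, and mixing the two countings is where your exponent arithmetic became garbled.
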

\begin{proof}
The first formula is trivial. For the second, consider the following tensor
\begin{equation}\label{vectorialtorsionagain}
 T(X,Y)=\omega(X,Y)A+\omega(X,W)Y-\omega(Y,W)X
\end{equation}
with $A,W$ fixed. Then a long but simple computation yields
\begin{equation*}
 T_{ijk}T_{pql}\omega^{ij}\omega^{kp}\omega^{ql}=2(2n^2-n-1)\omega(A,W).
\end{equation*}
The result now follows introducing $A=U+2\grad f,\ W=\frac{1}{2}U$, which is the case for the torsion of $\na^{U,f}$, finding
\begin{equation*}
 t^{\na^{U,f}}=2e^{-2f}(2n^2-n-1)\omega(A,W)=2e^{-2f}(2n^2-n-1)U(f).
\end{equation*}
The conformal factor is indeed $e^{(2-3)2f}$.
\end{proof}
The torsion in (\ref{tvectorialsimples}) is of vectorial type of the most general kind, as we deduce from (\ref{torirredut}) in section \ref{Torsion tensors}.
Notice $W=-A$ corresponds with the totally skew symmetric case, cf. (\ref{vectorialtorsionagain}).

\vspace{2cm}

\bibliographystyle{plain}

\begin{thebibliography}{10}



\bibitem{Agricola1}
I. Agricola and Chr. Thier, 
{\em The geodesics of metric connections with vectorial torsion},
Ann. Glob. Anal. Geom. 26 (2004), 321--332.


\bibitem{AlbuRawnsley}
R.~Albuquerque and J.~Rawnsley,
\newblock {\em Twistor Theory of Symplectic Manifolds},
\newblock J. Geom. Phys. 56 (2006), 214--246.



\bibitem{BourgCahen}
F.~Bourgeois and M.~Cahen,
\newblock {\em A variational principle for symplectic connections},
\newblock J. Geom. Phys., Vol 30, Issue 3 (1999) 233--265.

\bibitem{BursRawn}
F. Burstall and J. Rawnsley,
{\em Affine connections with $W=0$}, (2007)\\
{ http://arxiv.org/abs/math/0702032 },

\bibitem{Cabrera1}
F.~Cabrera,
\newblock {\em Special almost Hermitian geometry},
\newblock J. Geom. Phys., Vol 55, Issue 4 (2005), 450--470.


\bibitem{Rawnsley2}
M.~Cahen, S.~Gutt, J.~Horowitz and J.~Rawnsley,
\newblock {\em Homogeneous symplectic manifolds with ricci-type curvature},
\newblock J. Geom. Phys., Vol 38, Issue 2 (2001), 140--151.

\bibitem{Rawnsley3}
M.~Cahen, S.~Gutt, J.~Horowitz and J.~Rawnsley,
\newblock {\em Moduli space of symplectic connections of ricci type on $t^{2n}$; a formal approach},
\newblock J. Geom. Phys., Vol 46, (2003), 174--192.


\bibitem{Rawnsley1}
M.~Cahen, S.~Gutt and J.~Rawnsley,
\newblock {\em Symmetric symplectic spaces with Ricci-type curvature},
\newblock Kluwer Acad., Math. Phys. Stud., 22, 2000,
\newblock Conf\'erence Mosh\'e Flato 1999, Vol. II (Dijon).


\bibitem{GelRetShubin}
I. Gelfand, V. Retahk and M. Shubin,
{\em Fedosov manifolds},
Adv. Math. 136, (1998) 104--140.


\bibitem{Hab2Rosen}
K. Habermann, L. Habermann and P. Rosenthal,
{\em Symplectic Yang--Mills theory, Ricci tensor, and connections},
Calc. of Variations and Part. Dif. Eq., Vol 30, 2 (2007).

%\bibitem{Hit}
%N.~J. Hitchin. \newblock K\"ahlerian twistor spaces. \newblock {\em Proc. London Math. Soc. (3)}, 43(1):133--150, 1981.



\bibitem{Koba}
S.~Kobayashi,
\newblock {\em Differential geometry of complex vector bundles},
\newblock Iwanami Shoten, Princeton University Press, 1987.

\bibitem{HansProc}
H. Kraft and C. Procesi,
{\em Classical Invariant Theory A Primer},
Preliminary version, July 1996.


\bibitem{Dusa}
D.~McDuff and D.~Salamon,
\newblock {\em Introduction to symplectic topology},
\newblock Oxford Math. Monographs. Oxford Uni. Press, New York,
1998.



\bibitem{Nannicini1}
A.~Nannicini.
\newblock {\em Twistor bundles of almost symplectic manifolds},
\newblock Rend. Istit. Mat. Univ. Trieste, 30 (1998), 91--106.

\bibitem{Nannicini2}
A.~Nannicini,
\newblock {\em Twistor methods in conformal almost symplectic geometry},
\newblock Rend. Istit. Mat. Univ. Trieste, 34 (2002), 215--234.

\bibitem{RanNeri}
G. Rangarajan and F. Neri, 
{\em Canonical representations of $sp(2n,\R)$},
J. Math. Phys., Vol. 33, No. 4 (1992).

%\bibitem{Obri}
%N.~O'Brian and J.~Rawnsley.
%\newblock Twistor spaces.
%\newblock {\em Ann. of Global Analy. and Geometry}, 3(1):29--58, 1985.

%\bibitem{Sal}
%S.~Salamon.
%\newblock Harmonic and holomorphic maps.
%\newblock {\em Springer, Berlin}, pages 161--224, 1985.
%\newblock Geometry seminar ``Luigi Bianchi'' II.

\bibitem{Tondeur}
Ph.~Tondeur,
{\em Affine Zusammenh\"ange auf Mannigfaltigkeiten mit
fast\-sym\-plek\-ti\-scher Struk\-tur},
Comment. Math. Helv. 36 (1962), 234-244. 

\bibitem{Uraka}
H. Urakawa,
{\em Yang--Mills Theory over Compact Symplectic Manifolds},
Ann. Global Anal. Geom., Vol 25, 4 ( 2004), 365--402.



\bibitem{Vais1}
I.~Vaisman,
\newblock {\em Symplectic curvature tensors},
\newblock Monatsh. Math., 100(4) (1985), 299--327.

\bibitem{Vais2}
I.~Vaisman,
\newblock {\em Symplectic twistor spaces},
\newblock J. Geom. Phys., 3(4) (1986), 507--524.

\bibitem{Weyl}
H.~Weyl,
\newblock {\em Classical Groups, Their Invariants and Representations},
\newblock Princeton University Press, 1946.


\end{thebibliography}

%\end{color}

\end{document}